\def\NAT@def@citea{\def\@citea{\NAT@separator}}
\theoremstyle{plain}
\newtheorem{theorem}{Theorem}[section]
\newtheorem{lemma}[theorem]{Lemma}
\newtheorem{corollary}[theorem]{Corollary}
\theoremstyle{definition}
\newtheorem{definition}[theorem]{Definition}
\theoremstyle{remark}
\newtheorem{remark}{Remark}
\newcommand{\beq}{\begin{equation}} \newcommand{\eeq}{\end{equation}}
\newcommand{\bea}{\begin{eqnarray}} \newcommand{\eea}{\end{eqnarray}}
\newcommand{\bear}{\begin{eqnarray*}} \newcommand{\eear}{\end{eqnarray*}}
\newcommand{\lb}{\label} 
\newcommand{\rf}[1]{(\ref{#1})}
\begin{document}

\title {Noether-type theorem for fractional variational problems depending on fractional derivatives of functions}

\author{
\name{M.~J.~Lazo\textsuperscript{a}\thanks{CONTACT M.~J.~Lazo. Email: matheuslazo@furg.br}, G.~S.~F.~Frederico\textsuperscript{b} and P.~M.~Carvalho-Neto\textsuperscript{c}}
\affil{\textsuperscript{a}Federal University of Rio Grande, Rio Grande - RS, Brazil; \textsuperscript{b}Federal University of Cear\'a, Campus de Russas, Russas, Brazil; \textsuperscript{c}Federal University of Santa Catarina, Florian\'opolis - SC, Brazil.}
}

\maketitle

\begin{abstract}

In the present work, by taking advantage of a so-called practical limitation of fractional derivatives, namely, the absence of a simple chain and Leibniz's rules, we proposed a generalized fractional calculus of variation where the Lagrangian function depends on fractional derivatives of differentiable functions. The Euler-Lagrange equation we obtained generalizes previously results and enables us to construct simple Lagrangians for nonlinear systems. Furthermore, in our main result, we formulate a Noether-type theorem for these problems that provides us with a means to obtain conservative quantities for nonlinear systems. In order to illustrate the potential of the applications of our results, we obtain Lagrangians for some nonlinear chaotic dynamical systems, and we analyze the conservation laws related to time translations and internal symmetries.

\end{abstract}

\begin{keywords}
Fractional Noether-type theorem; fractional calculus of variation; nonlinear and  chaotic systems
\end{keywords}



\section{Introduction}

The calculus with non-integer order derivatives and integrals, historically called fractional calculus, originated in 1695 from a Leibniz letter in response to a question from l'H\^opital \cite{OldhamSpanier}. The fractional calculus caught the attention of important mathematicians throughout its history, like Euler, Fourier, Liouville, Riemann, and others. Despite being an active area of study in mathematics for more than three centuries, only recently it attained more attention from researchers from other areas due to the emergence of several important applications in various scientific fields (for examples, see\cite{SATM,Kilbas,Diethelm,Hilfer,Magin,SKM}).

Classical mechanics is an example where the fractional calculus has a remarkable application. In a seminal work, Riewe \cite{Riewe1,Riewe2} showed that the equation of motion of a particle under a frictional force can be obtained from a quadratic Lagrangian containing a half order derivative. The importance of this result lies in the fact that non-conservative forces are beyond the usual variational formulation of the action principle \cite{Bauer}, and hence, they are beyond the most advanced methods of Lagrangian mechanics. Most important, different from other approaches in the literature, the so-called fractional variational calculus introduced by Riewe give us a physically meaningful Lagrangian function \cite{Riewe1,Riewe2,LazoCesar}. Therefore, since most real-world systems are not conservative, the fractional calculus of variations provides us with a simple and more realistic approach to study a large variety of problems \cite{MalinowskaTorres}. Consequently, in the last few years, the fractional calculus of variations becomes a very active research area and several generalizations of Riewe's original approach with applications in diverse fields are proposed \cite{MalinowskaTorres}. Among these generalizations, recently one of us with a collaborator demonstrated that we can formulated a consistent action principle for linear nonconservative systems, correcting a mathematical inconsistency found in the Riewe's original approach, by considering Lagrangian functions depending on usual integer order and Caputo derivatives \cite{LazoCesar}.

The fractional calculus of variation with mixed integer and Caputo derivatives allows us to employ the mathematical tools of analytical mechanics to investigate nonconservative systems, as for example, the Nother's theorem to investigate conservation laws of non-conservative systems \cite{FredericoLazo}. The Noether theorem is usually claimed as the most important theorem for physics in the last century. All conservation laws in Physics, for examples, conservation of energy and momentum, are associated with the invariance of the action functional under continuous transformations. In contrast, dissipative forces remove energy and momentum from the system and, consequently, the standard Noether's invariants for conservative systems (as the total energy and momentum of the system) are broken. This fact and the absence of physically meaningful Lagrangian to study nonconservative systems in the context of the classical action principle proves the importance to generalise the Noether theorem to fractional calculus of variation.
Fortunately, several generalizations of the Noether theorem for Lagrangian functions depending on fractional derivatives, that can be used to study linear dissipative systems, has been recently formulated \cite{FredericoLazo,El-Nabulsi,MR2351636,MR2405377,CD:FredericoTorres:2007,Caputo,CD:FredericoTorres:2006,Zhang}.

Despite the Riewe approach has been successfully applied to study open and/or non-conservative linear systems, it cannot be directly applied to general nonlinear open systems. The limitation follows from the fact that, in order to obtain a final equation of motion containing only integer order derivatives, the Lagrangian should contain only quadratic terms depending on fractional derivatives. In the present work we proposed a generalization of a previous work \cite{LazoJerk}, and the Riewe fractional action principle, by taking advantage of a so-called practical limitation of fractional derivatives, namely, the absence of a simple chain and Leibniz's rules. We consider Lagrangian functions depending on mixed integer and fractional Caputo derivatives of differentiable functions. Although we consider only mixed integer and Caputo derivatives, because we are interested in the context of the action principle proposed in \cite{LazoCesar}, our results can be extended to several kinds of fractional derivatives. As an example, we could consider the recent $\psi$-Hilfer derivative that generalizes a wide class of fractional derivatives \cite{capelas1,capelas2}.
 Furthermore, and our main result, we formulated a Noether-type theorem for these problems, that enable us to obtain conservative quantities for nonlinear dissipative systems. As examples, we applied our generalized fractional Noether-type theorem to some nonlinear chaotic third-order dynamical systems. These systems are called jerk systems because the derivative of the acceleration with respect to time is referred to as the jerk \cite{jerk}. These systems are important because they are the simplest ever one-dimensional autonomous ordinary differential equations which display dynamical behaviors including chaotic solutions \cite{jerk1,jerk2a,jerk2b,jerk2c,jerk2d,jerk2e,jerk2f,jerk2g}. It is important to mention that jerk systems describe several phenomena in physics, engineering, and biology, such as electrical circuits, mechanical oscillators, laser physics, biological systems, etc \cite{jerk1,jerk2a,jerk2b,jerk2c,jerk2d,jerk2e,jerk2f,jerk2g}.

The article is structured in five sections. In Section \ref{sec:fdRL} we present a short introduction to the Riemann-Liouville and Caputo Fractional Calculus. Our main results are displayed in Section \ref{sec:NT}, where we generalized the Euler-Lagrange equation and the Noether theorem for a Lagrangian function that depends on fractional derivatives of differentiable functions. Illustrative examples are presented in Section \ref{sec:EX}. Finally, Section \ref{sec:C} is dedicated to our conclusions.


\section{The Riemann-Liouville and Caputo Fractional Calculus}
\lb{sec:fdRL}

We begin by recalling that there are several definitions of fractional order derivatives, which include the Riemann-Liouville, Caputo, Riesz, Weyl, etc (see \cite{OldhamSpanier,SATM,Kilbas,Diethelm,Hilfer,Magin,SKM,capelas3} for details). However, since this paper mainly addresses the Riemann-Liouville and Caputo fractional calculus, we dedicate this section to review some definitions related to them.

Actually, several known formulations of fractional calculus are somehow connected with the analytic continuation of Cauchy formula for the $n$-fold integration
\beq
\lb{a2}
\begin{split}
\int_{a}^t x(\tilde{t})(d\tilde{t})^{n} &= \int_{a}^t\int_{a}^{t_{n}}\int_{a}^{t_{n-1}}\cdots \int_{a}^{t_3}\int_{a}^{t_2} x(t_1)dt_1dt_2\cdots dt_{n-1}dt_{n} \\
&= \frac{1}{\Gamma(n)}\int_{a}^t \frac{x(u)}{(t-u)^{1-n}}du \;\;\;\;\; (n\in \mathbb{N}),
\end{split}
\eeq
where $\Gamma$ is the Euler gamma function. The proof of Cauchy formula can be found in several textbooks (for example, it can be found in \cite{OldhamSpanier}). The analytical continuation of \rf{a2} gives us a definition of integrations of non-integer order historically called Riemann-Liouville left and right fractional integrals:
\begin{definition} Let $\alpha \in \mathbb{R}_+$. The operators ${_a J^{\alpha}_t}$ and ${_t J^{\alpha}_b}$ defined on $L_1[a,b]$ by
\beq
\lb{a3}
{_a J^{\alpha}_t} x(t) =\frac{1}{\Gamma(\alpha)}\int_{a}^t \frac{x(u)}{(t-u)^{1-\alpha}}du \;\;\;\;\; (\alpha \in \mathbb{R}_+)
\eeq
and
\beq
\lb{a4}
{_t J^{\alpha}_b} x(t) =\frac{1}{\Gamma(\alpha)}\int_t^b \frac{x(u)}{(u-t)^{1-\alpha}}du \;\;\;\;\; ( \alpha \in \mathbb{R}_+),
\eeq
with $a<b$ and $a,b\in \mathbb{R}$, are called left and the right fractional Riemann-Liouville integrals of order $\alpha$, respectively.
\end{definition}

\begin{remark} 
\begin{itemize}
\item[(i)] Just recall that for any interval $[a,b]\subset\mathbb{R}$ we define
$$L_1[a,b]:=\left\{x:[a,b]\rightarrow\mathbb{R}: x \textrm{ is measurable and } \int_a^b{|x(t)|dt}<\infty\right\}.$$
\item[(ii)] For integer values of $\alpha$ the fractional Riemann-Liouville integrals \rf{a3} and \rf{a4} coincide with the usual integer order $n$-fold integration \rf{a2}. Moreover, from the definitions \rf{a3} and \rf{a4} it is easy to see that the Riemann-Liouville fractional integrals converge for any integrable function if $\alpha>0$.
\end{itemize}
\end{remark}

The integration operators ${_a J^{\alpha}_t}$ and ${_t J^{\alpha}_b}$ play a fundamental role in the definition of fractional Riemann-Liouville and Caputo calculus. In order to define the Riemann-Liouville derivatives, we recall that for positive integers $n>m$ it follows that
\beq
D^m_t x(t)=D^{n}_t {_aJ^{n-m}_t x(t)},
\eeq
where $D^m_t$ is an ordinary derivative of integer order $m$. If $\alpha\in [0,\infty)$, below we use the symbol $[\alpha]$ to represent the smallest integer that is greater or equal to $\alpha$.

\begin{definition}[Riemann-Liouville]
\begin{itemize}\item[(i)] The left Riemann-Liouville fractional derivative of order $\alpha >0$ is defined for functions in $x\in L_1[a,b]$, such that $_aJ_t^{1-\alpha}x(t)$ is $[\alpha]$ times differentiable (in the standard sense), and is defined by
\beq
\lb{a5}
{_a D^{\alpha}_t} x(t)=\frac{1}{\Gamma({[\alpha]}-\alpha)}\frac{d^{[\alpha]}}{dt^{[\alpha]}}\int_{a}^t \frac{x(u)}{(t-u)^{1+\alpha-{[\alpha]}}}du,
\eeq

\item[(ii)] The right Riemann-Liouville fractional derivative of order $\alpha >0$ is defined for functions in $x\in L_1[a,b]$, such that $_tJ_b^{1-\alpha}x(t)$ is $[\alpha]$ times differentiable (in the standard sense), and is defined by
\beq
\lb{a6}
{_t D^{\alpha}_b} x(t)=\frac{(-1)^{[\alpha]}}{\Gamma({[\alpha]}-\alpha)}\frac{d^{[\alpha]}}{dt^{[\alpha]}}\int_{t}^b \frac{x(u)}{(u-t)^{1+\alpha-{[\alpha]}}}du,
\eeq

\end{itemize}
\end{definition}

On the other hand, the Caputo fractional derivatives are defined in a distinct way.

\begin{definition}[Caputo]
\begin{itemize}\item[(i)] The left Caputo fractional derivatives of order $\alpha >0$ is defined for functions in $x\in C[a,b]$, such that $_aJ_t^{1-\alpha}x(t)$ is $[\alpha]$ times differentiable (in the standard sense), and is defined respectively by
\begin{equation}
\label{a7}
{_a^C D}^{\alpha}_t x(t) := {_a D^{\alpha}_t} [x(t)-x(a)],
\end{equation}

\item[(ii)] The right Caputo fractional derivatives of order $\alpha >0$ is defined for functions in $x\in C[a,b]$, such that $_tJ_b^{1-\alpha}x(t)$ is $[\alpha]$ times differentiable (in the standard sense), and is defined respectively by
\begin{equation}
\label{a8}
{_t^C D}^{\alpha}_b x(t)
:={_t D^{\alpha}_b} [x(b)-x(t)],
\end{equation}
\end{itemize}

Above, symbols ${_a D^{\alpha}_t}$ and ${_t D^{\alpha}_b} $ denote respectively the left and right Riemann-Liouville derivative of order $\alpha$.
\end{definition}

\begin{remark}
\begin{itemize}
\item[(i)] If $x\in C^{1}[a,b]$ and $\alpha\in(0,1)$ then we should observe that
\begin{equation*}
{_a^C D}^{\alpha}_t x(t) := {_a J^{1-\alpha}_t} x^\prime(t)\qquad\textrm{and}\qquad
{_t^C D}^{\alpha}_b x(t) := {_t J^{1-\alpha}_b} x^\prime(t).
\end{equation*}
\item[(ii)] An important consequence of definitions \eqref{a5}--\eqref{a8} is that the Riemann-Liouville and Caputo fractional derivatives are non-local operators. The left (right) differ-integration operator \eqref{a5} and \eqref{a7} (\eqref{a6} and \eqref{a8}) depends on the values of the function at left (right) of $t$, i.e. $a\leq u \leq t$ ($t\leq u \leq b$). On the other hand, it is important to note that when $\alpha$ is an integer, the Riemann-Liouville fractional derivatives \eqref{a5} and \eqref{a6} reduce to ordinary derivatives of order $\alpha$. On the other hand, in that case, the Caputo derivatives \eqref{a7} and \eqref{a8} differ from integer order ones by a polynomial of order $\alpha -1$ (see \cite{Kilbas,Diethelm} for details).
\end{itemize}
\end{remark}

It is important to observe, for the purpose of this work, that the fractional derivatives \rf{a5}--\rf{a8} do not satisfy a simple generalization of the chain and Leibniz's rules of classical derivatives \cite{OldhamSpanier,SATM,Kilbas,Diethelm,Hilfer,Magin,SKM,capelas1,capelas2}. In other words, generally we have:
\begin{equation}
\label{a9}
{_a^C D}^{\alpha}_t \left[x(t)y(t)\right]\neq y(t){_a^C D}^{\alpha}_tx(t)+x(t){_a^C D}^{\alpha}_ty(t)
\end{equation}
and
\begin{equation}
\label{a10}
{_a^C D}^{\alpha}_t y(x(t)) \neq {_a^C D}^{\alpha}_u y(u)|_{u=x} \;{_a^C D}^{\alpha}_t x(t).
\end{equation}
The absence of a simple chain and Leibniz's rules is commonly considered a practical limitation of the fractional derivatives \rf{a5}--\rf{a8}. However, in the present work, we take advantage of this limitation in order to formulate a generalized Lagrangians for nonlinear systems.

In addition to the definitions \rf{a5}--\rf{a8}, we make use of the following property in order to obtain a fractional generalization of the Euler-Lagrange condition.
\begin{theorem}[Integration by parts --- see, e.g., \cite{SKM}]
\label{thm:ml:03}
Let $0<\alpha<1$ and $x$ be a differentiable function in $[a,b]$ with $x(a)=x(b)=0$. For any function $y \in L_1([a,b])$ one has
\begin{equation}
\label{a15}
\int_{a}^{b} y(t) {_a^C D_t^{\alpha}} x(t)dt
= \int_a^b x(t) {_t D_b^{\alpha}} y(t)dt
\end{equation}
and
\begin{equation}
\label{a16}
\int_{a}^{b} y(t)  {_t^C D_b^\alpha} x(t)dt
=\int_a^b x(t) {_a D_t^\alpha} y(t) dt.
\end{equation}
\end{theorem}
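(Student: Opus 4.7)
The plan is to reduce the statement to ordinary integration by parts combined with a Fubini-type identity for the fractional integral operators ${}_aJ_t^{1-\alpha}$ and ${}_tJ_b^{1-\alpha}$. First I would exploit the remark given just before the theorem: since $0<\alpha<1$ and $x$ is differentiable on $[a,b]$, one has
\begin{equation*}
{}_a^C D_t^\alpha x(t) = {}_a J_t^{1-\alpha} x'(t),
\end{equation*}
so the left-hand side of \rf{a15} becomes $\int_a^b y(t)\,{}_a J_t^{1-\alpha} x'(t)\,dt$.

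Next I would establish the adjoint relation
\begin{equation*}
\int_a^b y(t)\,{}_a J_t^{1-\alpha} x'(t)\,dt = \int_a^b x'(t)\,{}_t J_b^{1-\alpha} y(t)\,dt.
\end{equation*}
This follows by writing out the fractional integral as a double integral $\frac{1}{\Gamma(1-\alpha)}\int_a^b\int_a^t y(t)x'(u)(t-u)^{\alpha-1}\,du\,dt$ and swapping the order of integration, so that the domain $\{a\le u\le t\le b\}$ is reparameterized so $t$ ranges over $[u,b]$. The integrability needed to invoke Fubini is supplied by $y\in L_1[a,b]$, the boundedness of $x'$ on the compact interval, and the fact that $(t-u)^{-\alpha}$ is locally integrable because $\alpha<1$.

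Then I would perform ordinary integration by parts on $\int_a^b x'(t)\,{}_t J_b^{1-\alpha} y(t)\,dt$, obtaining
\begin{equation*}
\Bigl[x(t)\,{}_t J_b^{1-\alpha} y(t)\Bigr]_a^b - \int_a^b x(t)\,\frac{d}{dt}\,{}_t J_b^{1-\alpha} y(t)\,dt.
\end{equation*}
The boundary term vanishes thanks to the hypothesis $x(a)=x(b)=0$. Finally, unwinding definition \rf{a6} with $[\alpha]=1$ gives $\frac{d}{dt}\,{}_t J_b^{1-\alpha} y(t) = -\,{}_t D_b^\alpha y(t)$, and substituting this produces exactly \rf{a15}. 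Identity \rf{a16} follows by a completely symmetric argument, using ${}_t^C D_b^\alpha x(t) = -{}_t J_b^{1-\alpha} x'(t)$ and the mirror version of the adjoint identity.

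I expect the main obstacle to be making the Fubini step rigorous, because one must check that the kernel $(t-u)^{\alpha-1}$ combined with $|y(t)||x'(u)|$ is jointly integrable on the triangle $\{a\le u\le t\le b\}$; once that is in hand, every remaining step is either a direct substitution of definitions or the classical product rule for integer-order differentiation, so no further subtlety arises.
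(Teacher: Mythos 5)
The paper itself gives no proof of this theorem---it is quoted from the literature (see \cite{SKM})---and your argument (rewrite ${_a^C D_t^{\alpha}} x = {_a J_t^{1-\alpha}}x'$ using the differentiability of $x$, pass $ {_a J_t^{1-\alpha}}$ to ${_t J_b^{1-\alpha}}$ by Fubini on the triangle $\{a\le u\le t\le b\}$, then apply classical integration by parts, kill the boundary terms with $x(a)=x(b)=0$, and use $\frac{d}{dt}\,{_t J_b^{1-\alpha}}y=-\,{_t D_b^{\alpha}}y$) is precisely the standard proof given in that reference, so it is correct and follows the same route. The only caveat is that the ordinary integration-by-parts step tacitly needs ${_t J_b^{1-\alpha}}y$ to be absolutely continuous (and its boundary values to pair finitely with $x$), which is exactly the implicit assumption already needed for ${_t D_b^{\alpha}}y$ on the right-hand side to exist at all---as stated, $y\in L_1[a,b]$ alone does not guarantee this---so this is a looseness of the theorem's hypotheses rather than a gap in your argument.
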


It is important to notice that the formulas of integration by parts \eqref{a15} and \eqref{a16} relate Caputo left (right) derivatives to Riemann-Liouville right (left) derivatives.


\section{A Generalized Fractional Euler--Lagrange Equation and Noether-type theorem}
\label{sec:NT}

In the classical calculus of variations, it is of no conceptual and practical importance to deal with Lagrangian functions depending on derivatives of nonlinear functions of the unknown function $q$. This is due to the fact that in these cases we can always rewrite the Lagrangian $L$ as a usual Lagrangian $\tilde{L}$ by applying the chain rule. As for example, for a differentiable function $f$ we can rewrote $L(t,q,\frac{d}{dt}f(q))=L(t,q,\frac{d}{du}f(u)|_{u=q}\dot{q})=\tilde{L}(t,q,\dot{q})$, where $\frac{dq}{dt}=\dot{q}$. However, this simplification for the fractional calculus of variation is not possible due to the absence of a simple chain's rule for fractional derivatives. Actually, there are chain-type and Leibniz-type rules for some fractional derivatives \cite{OldhamSpanier,SKM,capelas2}, but they are cumbersome relations involving higher orders derivatives that cannot be simplified. It is just this apparent limitation of fractional derivatives what opens the very interesting possibility to investigate new kinds of Lagrangians suitable to study nonlinear systems. In the present work, we investigate these kinds of Lagrangians and we apply them to construct Lagrangians for some jerk systems.

\subsection{Generalized Fractional Euler--Lagrange equations}

In this subsection, we present a higher order generalization of the problem introduced in \cite{LazoJerk} by one of the present authors.

\begin{theorem}
Given two arbitrary functions
 $f,g:\mathbb{R}\rightarrow \mathbb{R}$   and let a function $q\in C^n[a,b]$ such that, $f\circ q$ and $g\circ q^{(n)}$ are differentiable in $[a,b]$. Let $S$ an action of the form
\begin{equation}
\label{t1}
S=\int_a^b {L} \left(t,q,q^{(n)},{_a^C D_t^{\alpha}} f(q),{_a^C D_t^{\alpha}} g(q^{(n)})\right)dt,
\end{equation}
where ${_a^C D_t^{\alpha}}$ is a Caputo fractional derivative of order $0<\alpha<1$, $q^{(n)}=\frac{d^n q}{dt^n}$ $(n\in\mathbb{N})$, and the function $q$ satisfy the fixed boundary conditions $q(a)=q_a$, $q(b)=q_b$, and $q^{(i)}(a)=q^{(i)}_a$ $q^{(i)}(b)=q^{(i)}_b$ $(i=1,2,...,n)$. Also let ${L}\in C^{2}[a,b]\times \mathbb{R}^{4}$. Then, the necessary condition for $S$ to possess an extremum at $q$ is that the function $q$ fulfills the following fractional Euler-Lagrange equation:
\begin{equation}
\label{t2}
\frac{\partial {L}}{\partial q}+(-1)^n\frac{d^n}{dt^n}\frac{\partial {L}}{\partial q^{(n)}}+\frac{d f}{d q}{_t D^{\alpha}_b}\frac{\partial {L}}{\partial\left({_a^C D^{\alpha}_t} f\right)}+(-1)^n\frac{d^n}{dt^n}\left(\frac{d g}{d q^{(n)}}{_t D^{\alpha}_b}\frac{\partial {L}}{\partial\left({_a^C D^{\alpha}_t} g\right)}\right)=0.
\end{equation}
\end{theorem}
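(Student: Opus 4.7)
The plan is to proceed by the standard variational argument adapted to the fractional setting, exploiting the linearity of ${}_a^C D_t^\alpha$ and the fractional integration-by-parts formula of Theorem~\ref{thm:ml:03}. First, I would embed $q$ in a one-parameter family $q_\varepsilon(t):=q(t)+\varepsilon\eta(t)$ with $\eta\in C^n[a,b]$ satisfying $\eta^{(i)}(a)=\eta^{(i)}(b)=0$ for $i=0,1,\dots,n$, so that the fixed boundary data of the admissible class are preserved. The extremality condition reads $\left.\frac{d}{d\varepsilon}S(q_\varepsilon)\right|_{\varepsilon=0}=0$, and the key observation is that although neither Caputo differentiation nor composition with $f$ or $g$ commutes with the other in general, the variation with respect to $\varepsilon$ is an ordinary derivative in a parameter, so the usual chain rule gives
\begin{equation*}
\left.\frac{\partial}{\partial\varepsilon}f(q_\varepsilon)\right|_{\varepsilon=0}=\frac{df}{dq}\,\eta,\qquad
\left.\frac{\partial}{\partial\varepsilon}g(q_\varepsilon^{(n)})\right|_{\varepsilon=0}=\frac{dg}{dq^{(n)}}\,\eta^{(n)},
\end{equation*}
and by linearity of ${}_a^C D_t^\alpha$ one may pull this $\varepsilon$-derivative inside, obtaining ${}_a^C D_t^\alpha(\tfrac{df}{dq}\eta)$ and ${}_a^C D_t^\alpha(\tfrac{dg}{dq^{(n)}}\eta^{(n)})$ respectively.

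Next, I would differentiate $L$ through its five slots and collect the four nontrivial contributions. The term carrying $\eta^{(n)}$ coming from the $\partial L/\partial q^{(n)}$ slot is handled by the classical integration-by-parts $n$ times, producing the factor $(-1)^n\frac{d^n}{dt^n}\frac{\partial L}{\partial q^{(n)}}$; all boundary terms vanish because $\eta^{(i)}(a)=\eta^{(i)}(b)=0$ for $0\le i\le n-1$. For the term involving ${}_a^C D_t^\alpha(\tfrac{df}{dq}\eta)$, I would invoke formula \eqref{a15} with $x=\tfrac{df}{dq}\eta$ (which vanishes at the endpoints since $\eta$ does) and $y=\partial L/\partial({}_a^C D_t^\alpha f)$, yielding
\begin{equation*}
\int_a^b \eta\,\frac{df}{dq}\,{}_t D_b^\alpha \frac{\partial L}{\partial({}_a^C D_t^\alpha f)}\,dt .
\end{equation*}

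The last, and computationally heaviest, step is to process the $g$-term. Here I would first apply the fractional integration-by-parts \eqref{a15} with $x=\tfrac{dg}{dq^{(n)}}\eta^{(n)}$ (which again vanishes at $a,b$ because $\eta^{(n)}$ does) to transfer the Caputo derivative onto $\partial L/\partial({}_a^C D_t^\alpha g)$, and then perform classical integration by parts $n$ further times to move all $n$ derivatives off $\eta$, producing the factor $(-1)^n\frac{d^n}{dt^n}$ acting on $\tfrac{dg}{dq^{(n)}}\,{}_t D_b^\alpha\tfrac{\partial L}{\partial({}_a^C D_t^\alpha g)}$. Collecting the four contributions produces $\int_a^b\eta(t)\,E(t)\,dt=0$ with $E(t)$ equal to the left-hand side of \eqref{t2}. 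A final application of the fundamental lemma of the calculus of variations, valid because $\eta$ is arbitrary subject only to the boundary constraints, yields \eqref{t2}.

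The only genuinely delicate point I expect is the bookkeeping of boundary terms after the mixed use of classical and fractional integrations by parts: one must verify that at each of the $n$ classical steps on the $g$-term the quantity being differentiated (a product of $\tfrac{dg}{dq^{(n)}}\cdot{}_t D_b^\alpha(\cdot)$ times lower-order derivatives of $\eta$) is multiplied by $\eta^{(i)}$ evaluated at the endpoints, so that the prescribed vanishing $\eta^{(i)}(a)=\eta^{(i)}(b)=0$ is exactly what is needed; the smoothness hypotheses on $f\circ q$, $g\circ q^{(n)}$, and $L\in C^2$ ensure every derivative in sight is well defined. Everything else is routine.
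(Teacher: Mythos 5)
Your proposal is correct and follows essentially the same route as the paper's own proof: a one-parameter variation $q+\varepsilon\eta$ with all $\eta^{(i)}$ vanishing at the endpoints, the fractional integration-by-parts formula \eqref{a15} applied with $x=\frac{df}{dq}\eta$ and $x=\frac{dg}{dq^{(n)}}\eta^{(n)}$, followed by $n$ classical integrations by parts on the terms carrying $\eta^{(n)}$, and the fundamental lemma. The boundary-term bookkeeping you flag as the delicate point is exactly what the paper's hypotheses $\eta^{(i)}(a)=\eta^{(i)}(b)=0$, $i=0,\dots,n$, are there to handle, so no gap remains.
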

\begin{proof}
In order to develop the necessary conditions for the extremum of the action \rf{t1}, we define a family of functions $q$ (weak variations)
\beq
\lb{p1}
q=q^*+\varepsilon \eta,
\eeq
where $q^*$ is the desired real function that satisfy the extremum of \rf{t1}, $\varepsilon \in \mathbb{R}$ is a constant, and the function $\eta$ defined in $[a,b]$ satisfy the boundary conditions
\beq
\lb{p2}
\eta(a)=\eta(b)=0, \;\;\;\;\;\; \eta^{(i)}(a)=\eta^{(i)}(b)=0 \;\;\;\;\;\; (i=1,2,...,n).
\eeq
The condition for the extremum is obtained when the first G\^ateaux variation is zero
\beq
\lb{p3}
\begin{split}
\delta S&=\lim_{\varepsilon \rightarrow 0} \frac{S[q^*+\varepsilon \eta]-S[q^*]}{\varepsilon}=\int_a^b \left[ \eta \frac{\partial L}{\partial q^*} +\eta^{(n)}\frac{\partial {L}}{\partial\left(q^{*(n)}\right)} \right.\\
&\left. \;\;\;\;\;\;\;\; +\left({_a^C D_t^{\alpha}}\eta\frac{df}{dq^*}\right)\frac{\partial L}{\partial \left( {_a^C D_t^{\alpha}} f\right)}+\left({_a^C D_t^{\alpha}}\eta^{(n)}\frac{dg}{dq^{*(n)}}\right)\frac{\partial L}{\partial \left( {_a^C D_t^{\alpha}} g\right)}\right]dt=0.
\end{split}
\eeq
Since the function $\eta$ satisfies both $\eta(a)=\eta(b)=0$ and $\eta^{(i)}(a)=\eta^{(i)}(b)=0$ ($i=1,2,...,n$) boundary conditions \rf{p2}, we can use the fractional integration by parts \rf{a15} and \rf{a16} in \rf{p3}, obtaining:
\beq
\lb{p4}
\begin{split}
\delta S&=\int_a^b \left[ \eta \frac{\partial L}{\partial q^*} +\eta^{(n)}\frac{\partial {L}}{\partial\left(q^{*(n)}\right)} +\eta\frac{df}{dq^*}{_t D_b^{\alpha}}\frac{\partial L}{\partial \left( {_a^C D_t^{\alpha}} f\right)}+\eta^{(n)}\frac{dg}{dq^{*(n)}}{_t D_b^{\alpha}}\frac{\partial L}{\partial \left( {_a^C D_t^{\alpha}} g\right)}\right]dt\\
&=\int_a^b \eta \left[\frac{\partial L}{\partial q^*} +(-1)^n\frac{d^n}{dt^n}\frac{\partial {L}}{\partial\left(q^{*(n)}\right)} +\frac{df}{dq^*}{_t D_b^{\alpha}}\frac{\partial L}{\partial \left( {_a^C D_t^{\alpha}} f\right)}\right.\\
&\qquad \qquad \qquad \qquad \qquad \qquad \qquad \left.+(-1)^n\frac{d^n}{dt^n}\left(\frac{dg}{dq^{*(n)}}{_t D_b^{\alpha}}\frac{\partial L}{\partial \left( {_a^C D_t^{\alpha}} g\right)}\right)\right]dt=0,
\end{split}
\eeq
where additional usual integration by parts was performed in the terms containing $\eta^{(n)}$. Finally, by using the Fundamental Lemma of the calculus of variations, we obtain the fractional Euler-Lagrange equations \rf{t2}.
\end{proof}

It is important to notice that our Theorem can be easily extended for Lagrangians depending on left Caputo derivatives, and Riemann-Liouville fractional derivatives. Actually, it is also easy to generalize in order to include a nonlinear function $g\left({_a^C D_t^{\alpha}} x\right)$ instead of $g(\dot{x})$. Finally, it is important to mention that our Theorem generalizes \cite{Riewe1,Riewe2} and the more general formulation proposed in \cite{Agrawal}, as well as the Lagrangian formulation for higher order linear open systems \cite{LazoCesar}
(for a review in recent advances in the calculus of variations with fractional derivatives see \cite{MalinowskaTorres}).

\begin{remark} For $n=1$, and $f(q)=g(\dot{q})=0$, our condition \rf{t2} reduces to the ordinary Euler-Lagrange equation, and the boundary conditions $q(a)=q_a$, $q(b)=q_b$ and $\dot{q}(a)=\dot{q}_a$, $\dot{q}(b)=\dot{q}_b$ are defined by only two arbitrary parameter. Note that for this particular case the Euler-Lagrange equation is a second order ordinary differential equation whose solution $q(t)$ is fixed by two parameters. For example, by imposing the conditions $q(a)=q_a$ and $q(b)=q_b$ the solution $q(t)$ is fixed and, consequently, the numbers $\dot{q}_a$ and $\dot{q}_b$ are automatically fixed as functions of $q_a$ and $q_b$.
\end{remark}

\subsection{Fractional Noether-type theorem}

In a seminal work, Emmy Noether obtained a correspondence linking symmetry groups of a Lagrangian system and its corresponding constants of motion (also known as its first integrals). Actually, when a Lagrangian is invariant under the action of a symmetry group, then the system exhibits a corresponding conservation law. A symmetry is defined by a one-parameter group of diffeomorphisms under which the action functional is invariant:

\begin{definition}(Symmetry group)
\label{sy} Let $\varepsilon \in \mathbb{R}$ be a parameter. If $\phi(\varepsilon,\cdot)\,:\mathbb{R}^n\rightarrow\mathbb{R}^n$ is a diffeomorphism, assuming that $\phi$ is $C^2$ with respect to $\varepsilon$, and if we have
\begin{enumerate}
\item $\phi (0,\cdot) = I_{\mathbb{R}^n}$;
\item $\forall \varepsilon,\varepsilon' \in \mathbb{R},
\; \phi (\varepsilon,\cdot) \circ \phi (\varepsilon',\cdot) = \phi
(\varepsilon+\varepsilon',\cdot) $,
\end{enumerate}
then $\Phi=\left\{\phi(\varepsilon,\cdot)\right\}_{\varepsilon\in\mathbb{R}}$
is a one-parameter group of diffeomorphisms of $\mathbb{R}^n$.
\end{definition}

\begin{remark} The translation in a spatial direction $u$ is a typical case of a one-parameter group of diffeomorphisms: 
$$\phi:q \longmapsto q+\varepsilon u\,,\quad q,u\in\mathbb{R}^n.$$
Another common example is the rotations by an angle $\theta$
$$\phi:q \longmapsto qe^{i\varepsilon\theta}\,,\quad q\in\mathbb{C}\,,\quad \theta\in \mathbb{R}\,.$$
\end{remark}

In the present work, we use the concept of a group of diffeomorphisms rather than the concept of infinitesimal transformations as in \cite{MR2351636,MR2405377}. These two concepts can be related by a Taylor expansion of $\phi(\varepsilon,q(t))$ in the neighborhood of $\varepsilon=0$. We have:
$$\phi(\varepsilon,q(t))=\phi(0,q(t))+\varepsilon\frac{\partial \phi}{\partial \varepsilon}(0,q(t))+o(\varepsilon)\,.$$
By considering the premise in the Definition \ref{sy} that $\phi (0,\cdot) = I_{\mathbb{R}^n}$, we obtain the related one-parameter infinitesimal transformation
$$q(t)\longmapsto q(t) + \varepsilon\xi(t,q(t)) + o(\varepsilon),$$
where we write $\xi(t,q(t))=\frac{\partial \phi}{\partial
\varepsilon}(0,q(t))$.

With the aim of generalizing the Noether Theorem for our action functional
\eqref{t1}, we follows closely the approach described in \cite{CD:Aves:1986} and used in our previous work \cite{FredericoLazo}. Firstly, we need to define the concept  of  symmetry for a functional of the form \eqref{t1} under the action of a group of diffeomorphisms. Let us consider first the simpler case where we have transformations only in space (without time transformation)
\begin{definition}(Invariance without time transformation).
\label{def:inv1:MR} Let $\Phi_1=\left\{\phi_1(\varepsilon,\cdot)\right\}_{\varepsilon\in\mathbb{R}}$ be a one-parameter group of diffeomorphisms of $\mathbb{R}$ with $\overline{q}=\phi_1(\varepsilon,q)$. Then, the fractional functional \eqref{t1} is $\varepsilon$-invariant under the action of $\Phi_1=\left\{\phi_1(\varepsilon,\cdot)\right\}_{\varepsilon\in\mathbb{R}}$ if, for any solution $q$ of \eqref{t2},  it satisfies
\begin{multline}
\label{eq:invdf}
\int_{t_a}^{t_b} L\left(t,q,\dot{q},{_a^CD_t^\alpha f(q)},{_a^CD_t^\alpha g(\dot{q})}\right) dt=\int_{t_a}^{t_b} L\left(t,\overline{q},\dot{\overline{q}},{_a^CD_t^\alpha f(\overline{q})},{_a^CD_t^\alpha g(\dot{\overline{q}})}\right) dt\\
= \int_{t_a}^{t_b}
L\left(t,\phi_1(\varepsilon,q),\dot{\phi_1}(\varepsilon,q),{_a^CD_t^\alpha
f(\phi_1(\varepsilon,q))},{_a^CD_t^\alpha
g(\dot{\phi_1}(\varepsilon,q))}\right) dt
\end{multline}
in any subinterval $[{t_{a}},{t_{b}}] \subseteq [a,b]$. Here and in the sequel, we consider $n=1\,$.
\end{definition}

From the definition of invariance in \eqref{eq:invdf}, we obtain an important necessary condition that will play a crucial role in obtaining a generalized Noether-type theorem. This condition is given by the following theorem:
\begin{theorem}(Necessary condition for invariance).
Let the functional \eqref{t1} be invariant in the meaning of
\textup{Definition~\ref{def:inv1:MR}}. Then
\begin{multline}
\label{eq:cnsidf11} \frac{\partial \phi_1}{\partial
\varepsilon}(0,q(t))\cdot
\frac{d}{dt}\partial_{3} L\left(t,q,\dot{q},{_a^CD_t^\alpha f(q)},{_a^CD_t^\alpha g(\dot{q})}\right)\\
+ \partial_{3}L\left(t,q,\dot{q},{_a^CD_t^\alpha f(q)},{_a^CD_t^\alpha g(\dot{q})}\right)\cdot
\frac{d}{dt}\frac{\partial \phi_1}{\partial \varepsilon}(0,q(t))\\
+ \partial_{4} L\left(t,q,\dot{q},{_a^CD_t^\alpha f(q)},{_a^CD_t^\alpha g(\dot{q})}\right)
\cdot {_a^CD_t^\alpha}\left(\frac{df}{dq}\frac{\partial\phi_1}{\partial\varepsilon}(0,q)\right)\\
- \frac{df}{dq}\frac{\partial \phi_1}{\partial \varepsilon}(0,q(t))\cdot
{_tD_b^\alpha}\partial_{4} L\left(t,q,\dot{q},{_a^CD_t^\alpha f(q)},{_a^CD_t^\alpha g(\dot{q})}\right)\\
+ \frac{\partial \phi_1}{\partial
\varepsilon}(0,q(t))\cdot \frac{d}{dt}\left(\frac{d g}{d\dot{q}}{_t D^{\alpha}_b}\partial_{5} L\left(t,q,\dot{q},{_a^CD_t^\alpha f(q)},{_a^CD_t^\alpha g(\dot{q})}\right)\right)\\
+\frac{d}{dt}\frac{\partial\phi_1}{\partial\varepsilon}(0,q)\frac{dg}{d\dot{q}}\cdot{_t^CD_b^\alpha}\partial_{5} L\left(t,q,\dot{q},{_a^CD_t^\alpha f(q)},{_a^CD_t^\alpha g(\dot{q})}\right)
 = 0 \, .
\end{multline}
\end{theorem}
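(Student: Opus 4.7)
The plan is to differentiate the invariance identity (\ref{eq:invdf}) with respect to $\varepsilon$ at $\varepsilon=0$, extract a pointwise condition by varying the subinterval $[t_a,t_b]$, and then rewrite that condition using the Euler--Lagrange equation (\ref{t2}) specialized to $n=1$ (which is the standing assumption of this subsection).

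Concretely, I would first note that the left-hand side of (\ref{eq:invdf}) is independent of $\varepsilon$, so differentiating the right-hand side under the integral sign, applying the chain rule, and using the normalization $\phi_1(0,q)=q$ (and hence $\dot\phi_1(0,q)=\dot q$) yields
$$
\int_{t_a}^{t_b}\!\!\left[\partial_2 L\,\xi + \partial_3 L\,\dot\xi + \partial_4 L\cdot {_a^CD_t^\alpha}\!\left(\frac{df}{dq}\,\xi\right) + \partial_5 L\cdot {_a^CD_t^\alpha}\!\left(\frac{dg}{d\dot q}\,\dot\xi\right)\right]dt = 0,
$$
where I abbreviate $\xi(t)=\frac{\partial\phi_1}{\partial\varepsilon}(0,q(t))$ and the slots of $L$ are evaluated as in (\ref{eq:invdf}). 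Since this holds for every subinterval $[t_a,t_b]\subseteq[a,b]$, the Fundamental Lemma of the calculus of variations forces the bracketed integrand to vanish pointwise.

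Next, because $q$ is by hypothesis a solution of (\ref{t2}), I would use the Euler--Lagrange equation with $n=1$ to substitute
$$
\partial_2 L = \frac{d}{dt}\partial_3 L - \frac{df}{dq}\,{_tD_b^\alpha}\partial_4 L + \frac{d}{dt}\!\left(\frac{dg}{d\dot q}\,{_tD_b^\alpha}\partial_5 L\right)
$$
into the pointwise identity. The $\xi\,\partial_2 L$ term then produces the first, fourth, and fifth lines of (\ref{eq:cnsidf11}), while $\partial_3 L\,\dot\xi$ and $\partial_4 L\cdot {_a^CD_t^\alpha}(f'\xi)$ appear directly as the second and third lines.

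The step I expect to be the main obstacle is the appearance of the right \emph{Caputo} operator ${_t^CD_b^\alpha}$ in the last line of (\ref{eq:cnsidf11}), where a naive reading of the calculation would instead give a residual term of the form $\partial_5 L\cdot {_a^CD_t^\alpha}(g'\dot\xi)$. Converting this into $\dot\xi\,g'\cdot{_t^CD_b^\alpha}\partial_5 L$ requires careful use of the integration-by-parts formula (\ref{a15}) together with the identity relating ${_tD_b^\alpha}$ and ${_t^CD_b^\alpha}$ (which differ by a $(b-t)^{-\alpha}$-type contribution tied to the boundary value at $b$): the resulting boundary piece must cancel against what is generated by differentiating the product $g'\,{_tD_b^\alpha}\partial_5 L$ in the line above. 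Once this fractional bookkeeping is carried out, the six summands line up verbatim with (\ref{eq:cnsidf11}).
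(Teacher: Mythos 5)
Your opening steps match the paper's: differentiate \eqref{eq:invdf} in $\varepsilon$ at $\varepsilon=0$, commute $\partial/\partial\varepsilon$ with $d/dt$ and with ${_a^CD_t^\alpha}$ (which acts only on $t$), and eliminate $\partial_2 L$ via the Euler--Lagrange equation \eqref{t2} with $n=1$. The genuine gap is the order of the last two operations. You invoke the arbitrariness of $[t_a,t_b]$ immediately after differentiating, so from that point on you work with a \emph{pointwise} identity whose last summand is $\partial_5 L\cdot{_a^CD_t^\alpha}\bigl(\tfrac{dg}{d\dot q}\dot\xi\bigr)$. But the integration-by-parts formula \eqref{a15} is an identity between integrals, not between integrands: it cannot be applied to a single term of a pointwise equation, and $\partial_5 L\cdot{_a^CD_t^\alpha}\bigl(\tfrac{dg}{d\dot q}\dot\xi\bigr)$ and $\dot\xi\,\tfrac{dg}{d\dot q}\cdot{_tD_b^\alpha}\partial_5 L$ are in general different functions of $t$ (only their integrals over the interval agree, under the hypotheses of Theorem~\ref{thm:ml:03}). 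So the ``fractional bookkeeping'' you defer to cannot be carried out at the pointwise level; your route would establish a (different) necessary condition whose last line is $\partial_5 L\cdot{_a^CD_t^\alpha}\bigl(\tfrac{dg}{d\dot q}\dot\xi\bigr)$, not the stated identity \eqref{eq:cnsidf11}. The paper avoids this by keeping everything under the integral over the arbitrary subinterval: it substitutes the Euler--Lagrange equation, applies Theorem~\ref{thm:ml:03} to $\int\partial_5 L\cdot{_a^CD_t^\alpha}\bigl(\tfrac{dg}{d\dot q}\dot\xi\bigr)dt$ while it is still an integral, and only afterwards uses that \eqref{eq:invdf} holds on every subinterval $[t_a,t_b]$ to conclude that the transformed integrand vanishes.

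Your proposed mechanism for producing the right-sided Caputo operator --- a $(b-t)^{-\alpha}$-type boundary contribution cancelling against ``what is generated by differentiating the product $\tfrac{dg}{d\dot q}{_tD_b^\alpha}\partial_5 L$'' --- also has no basis: differentiating that product in the fifth line produces no boundary term of that shape, so there is nothing available to cancel. What Theorem~\ref{thm:ml:03} actually yields is the Riemann--Liouville derivative ${_tD_b^\alpha}\partial_5 L$; the Caputo symbol ${_t^CD_b^\alpha}$ in the paper's last line comes directly from its own (notationally loose) application of that theorem, not from any cancellation you would need to engineer. A minor additional point: the localization step you attribute to the Fundamental Lemma is really the elementary fact that a continuous function whose integral vanishes over every subinterval is identically zero; that part is fine, it is only its premature use that breaks the argument.
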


\begin{proof}
In order to proof the condition given by \eqref{eq:cnsidf11}, since the functional is invariant, we first differentiate \eqref{eq:invdf} with respect to the variable $\varepsilon$. After this, by setting $\varepsilon=0$ we obtain:
\begin{multline}
\label{eq:SP} 0 = \int_{t_a}^{t_b}\Biggl[\partial_{2} L\left(t,q,\dot{q},{_a^CD_t^\alpha f(q)},{_a^CD_t^\alpha g(\dot{q})}\right)\cdot\frac{\partial \phi_1}{\partial \varepsilon}(0,q)\\
+\partial_{3}L\left(t,q,\dot{q},{_a^CD_t^\alpha f(q)},{_a^CD_t^\alpha g(\dot{q})}\right)\cdot\frac{\partial}{\partial\varepsilon}\left.\left[ \frac{d \phi_1}{dt}(\varepsilon,q)\right]\right|_{\varepsilon=0} \\
+ \partial_{4} L\left(t,q,\dot{q},{_a^CD_t^\alpha f(q)},{_a^CD_t^\alpha g(\dot{q})}\right)
\cdot\frac{\partial}{\partial\varepsilon}\left.\left[ {_a^CD_t^\alpha}
f(\phi_1(\varepsilon,q))\right]\right|_{\varepsilon=0}\\
+ \partial_{5} L\left(t,q,\dot{q},{_a^CD_t^\alpha f(q)},{_a^CD_t^\alpha g(\dot{q})}\right)
\cdot\frac{\partial}{\partial\varepsilon}\left.\left[ {_a^CD_t^\alpha}g
\left((\dot{\phi}_1(\varepsilon,q))\right)\right]\right|_{\varepsilon=0}\Biggr]dt\,.
\end{multline}
The second term in the integral \eqref{eq:SP} can be simplified by recalling that we consider $\phi_1(\varepsilon,q)\in C^2$ with respect to $\varepsilon$ in the Definition~\ref{sy}. Consequently, we have
 \begin{equation}
 \label{ce}
 \frac{\partial}{\partial\varepsilon}\left.\left[ \frac{d \phi_1}{dt}(\varepsilon,q)\right]\right|_{\varepsilon=0}
 =\frac{d}{dt}\frac{\partial\phi_1}{\partial\varepsilon}(0,q)\,.
\end{equation}
The third and fourth terms in this integral can also be simplified in the same way as in \eqref{ce}, since $_a^CD_t^\alpha$ operates only on $t$. Then, we can reduce
 \begin{equation}
 \frac{\partial}{\partial\varepsilon}\left.\left[ {_a^CD_t^\alpha} f(\phi_1(\varepsilon,q))\right]\right|_{\varepsilon=0}
 ={_a^CD_t^\alpha}\left(\frac{df}{dq}\frac{\partial\phi_1}{\partial\varepsilon}(0,q)\right)\,,
 \end{equation}
 and
 \begin{equation}
 \label{ce1}
 \frac{\partial}{\partial\varepsilon}\left.\left[ {_a^CD_t^\alpha} g(\dot{\phi}_1(\varepsilon,q))\right]\right|_{\varepsilon=0}
 ={_a^CD_t^\alpha}\left(\frac{dg}{d\dot{q}}\frac{d}{dt}\frac{\partial\phi_1}{\partial\varepsilon}(0,q)\right)\,.
 \end{equation}

By inserting \eqref{ce}--\eqref{ce1} into relation \eqref{eq:SP}, and by using the Euler--Lagrange equation \eqref{t2}, we obtain
\begin{multline*}
\int_{t_a}^{t_b}\Biggl[\frac{\partial \phi_1}{\partial
\varepsilon}(0,q(t))\cdot
\frac{d}{dt}\partial_{3} L\left(t,q,\dot{q},{_a^CD_t^\alpha f(q)},{_a^CD_t^\alpha g(\dot{q})}\right)\\
+ \partial_{3}L\left(t,q,\dot{q},{_a^CD_t^\alpha f(q)},{_a^CD_t^\alpha g(\dot{q})}\right)\cdot
\frac{d}{dt}\frac{\partial \phi_1}{\partial \varepsilon}(0,q(t))\\
+ \partial_{4} L\left(t,q,\dot{q},{_a^CD_t^\alpha f(q)},{_a^CD_t^\alpha g(\dot{q})}\right)
\cdot {_a^CD_t^\alpha}\left(\frac{df}{dq}\frac{\partial\phi_1}{\partial\varepsilon}(0,q)\right)\\
- \frac{df}{dq}\frac{\partial \phi_1}{\partial \varepsilon}(0,q(t))\cdot
{_tD_b^\alpha}\partial_{4} L\left(t,q,\dot{q},{_a^CD_t^\alpha f(q)},{_a^CD_t^\alpha g(\dot{q})}\right)\\
+ \frac{\partial \phi_1}{\partial
\varepsilon}(0,q(t))\cdot \frac{d}{dt}\left(\frac{d g}{d\dot{q}}{_t D^{\alpha}_b}\partial_{5} L\left(t,q,\dot{q},{_a^CD_t^\alpha f(q)},{_a^CD_t^\alpha g(\dot{q})}\right)\right)\\
+\partial_{5} L\left(t,q,\dot{q},{_a^CD_t^\alpha f(q)},{_a^CD_t^\alpha g(\dot{q})}\right)\cdot{_a^CD_t^\alpha}\left(\frac{dg}{d\dot{q}}\frac{d}{dt}\frac{\partial\phi_1}{\partial\varepsilon}(0,q)\right)\Biggr]dt= 0 \, .
\end{multline*}
Finally, by using Theorem~\ref{thm:ml:03} in the last term in above integral, and by having in mind that condition \eqref{eq:invdf} is valid for any subinterval
$[{t_{a}},{t_{b}}] \subseteq [a,b]$, we obtain equation \eqref{eq:cnsidf11}.
\end{proof}

Before we enunciate our generalized Noether-type theorem, we present here a Lemma introduced in \cite{CD:BoCrGr} known as Transfer Formula:
\begin{lemma}(Transfer Formula (see \cite{CD:BoCrGr})).
\label{trfo} \\Let $f,g\in
C^{\infty}\left([a,b];\mathbb{R}^n\right)$ be functions that satisfy the condiction that the sequences $\left(g^{(k)}\cdot
_aI_t^{k-\alpha}(f-f(a))\right)_{k\in \mathbb{N}\setminus\{0\}}$ and
$\left(f^{(k)}\cdot _tI_b^{k-\alpha}g\right)_{k\in
\mathbb{N}\setminus\{0\}}$ converge to $0$ uniformly in the interval $[a,b]$ (in the sequel, we are going to refer to this condition as condition $(\mathcal{C})$).
Thus, the equality
\begin{equation*}
g\cdot {_a^CD_t^\alpha}f-f\cdot{_tD_b^\alpha}g
=\frac{d}{dt}\left[\sum_{r=0}^{\infty}\left((-1)^{r}g^{(r)} \cdot
{_aI_t}^{r+1-\alpha}(f-f(a))+f^{(r)} \cdot
{_tI_b}^{r+1-\alpha}g\right)\right]
\end{equation*}
holds.
\end{lemma}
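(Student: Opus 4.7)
The plan is to work with partial sums of the right-hand series, compute their derivative directly, and show that the ``tail'' term vanishes in the limit by virtue of condition $(\mathcal{C})$.

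First I would apply the ordinary Leibniz rule to each summand. Two elementary identities are needed: for smooth $h$ and $\beta > 1$, $\frac{d}{dt}{_aI_t}^{\beta} h = {_aI_t}^{\beta-1} h$ and $\frac{d}{dt}{_tI_b}^{\beta} h = -{_tI_b}^{\beta-1} h$; and at the edge case $\beta = 1-\alpha$, the definitions of the Caputo and Riemann--Liouville derivatives in \rf{a5}--\rf{a8} give $\frac{d}{dt}{_aI_t}^{1-\alpha}(f-f(a)) = {_a^CD_t^\alpha} f$ and $\frac{d}{dt}{_tI_b}^{1-\alpha} g = -{_tD_b^\alpha} g$. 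Applying these, the derivative of the $r$-th summand of the first family is $(-1)^{r} g^{(r+1)} \cdot {_aI_t}^{r+1-\alpha}(f-f(a)) + (-1)^{r} g^{(r)} \cdot {_aI_t}^{r-\alpha}(f-f(a))$, with the convention that the second piece at $r=0$ reads $g \cdot {_a^CD_t^\alpha} f$; an analogous expansion handles the second family, whose $r=0$ remainder is $-f \cdot {_tD_b^\alpha} g$.

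Next I would fix $N$, take the $N$-th partial sum $\Psi_N$ of the right-hand series, and differentiate it termwise. After reindexing $s=r+1$ in the terms carrying the shifted derivatives $g^{(r+1)}$ and $f^{(r+1)}$, the inner sums telescope pair by pair (using $(-1)^{s-1} + (-1)^{s} = 0$), and what remains is
\[
\frac{d\Psi_N}{dt} = g \cdot {_a^CD_t^\alpha} f - f \cdot {_tD_b^\alpha} g + (-1)^{N} g^{(N+1)} \cdot {_aI_t}^{N+1-\alpha}(f-f(a)) + f^{(N+1)} \cdot {_tI_b}^{N+1-\alpha} g.
\]
Condition $(\mathcal{C})$ forces the last two summands to tend uniformly to zero as $N \to \infty$, which simultaneously justifies passing the limit through the derivative and identifies the limit of $d\Psi_N/dt$ with $g \cdot {_a^CD_t^\alpha} f - f \cdot {_tD_b^\alpha} g$, completing the argument.

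The main obstacle is the analytic step of passing to the limit: one must argue that both $(\Psi_N)$ and $(\Psi_N')$ converge uniformly on $[a,b]$, so that $\Psi_N' \to \Psi'$, and that this limit equals the claimed expression. Both requirements are direct consequences of condition $(\mathcal{C})$, which was engineered precisely to control the residual tail that survives the telescoping.
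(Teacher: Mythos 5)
The paper itself offers no proof of this lemma --- it is imported verbatim from Bourdin \cite{CD:BoCrGr} --- so your argument can only be judged on its own terms. On those terms the computational core is right and is the natural route to the Transfer Formula: the differentiation identities for ${_aI_t^{\beta}}$ and ${_tI_b^{\beta}}$, the edge cases at $r=0$ producing ${_a^CD_t^\alpha}f$ and $-{_tD_b^\alpha}g$, the telescoping after the shift $s=r+1$, and the identification of the surviving tail $(-1)^N g^{(N+1)}\cdot{_aI_t^{N+1-\alpha}}(f-f(a)) + f^{(N+1)}\cdot{_tI_b^{N+1-\alpha}}g$ with the $(\mathcal{C})$-sequences at index $k=N+1$ are all correct; this is precisely why condition $(\mathcal{C})$ pairs derivative order $k$ with integral order $k-\alpha$.

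The one place you overclaim is the final analytic step. Condition $(\mathcal{C})$ does give the uniform convergence of $\Psi_N'$ directly, but it does not ``directly'' give convergence of $\Psi_N$: the summands of the series are $g^{(r)}\cdot{_aI_t^{r+1-\alpha}}(f-f(a))$ and $f^{(r)}\cdot{_tI_b^{r+1-\alpha}}g$, whose integral order is $r+1-\alpha$ rather than the $r-\alpha$ appearing in $(\mathcal{C})$, and the semigroup identity ${_aI_t^{r+1-\alpha}}h={_aI_t^{1}}\,{_aI_t^{r-\alpha}}h$ only bounds them by $|g^{(r)}(t)|\int_a^t\left|{_aI_s^{r-\alpha}}(f-f(a))\right|ds$, in which $g^{(r)}$ and the integrand are evaluated at different points --- a quantity not controlled by the uniform smallness of the pointwise products assumed in $(\mathcal{C})$. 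The honest patch is either to read the statement as presupposing that the right-hand series is well defined (it must be, for the identity to make sense), or to add the observation that convergence of $\Psi_N$ at a single point $t_0\in[a,b]$ suffices: combined with the uniform convergence of $\Psi_N'$ that you did establish, the classical theorem on differentiation of uniformly convergent sequences yields uniform convergence of $\Psi_N$ and $\Psi'=\lim_N\Psi_N'=g\cdot{_a^CD_t^\alpha}f-f\cdot{_tD_b^\alpha}g$, which completes the proof.
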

This Lemma will be used to proof our first generalized Noether-type theorem.

\begin{theorem}(Fractional Noether-type theorem without time transformation).
\label{theo:tnadf1} Let $S$ be a functional given by \eqref{t1} with $n=1$. If $S$ is invariant in the meaning of \textup{Definition~\ref{def:inv1:MR}}, and if the functions $\frac{\partial
\phi_1}{\partial \varepsilon}(0,q)$ and $\partial_{4} L$ satisfy the
condition $(\mathcal{C})$ in \textup{Lemma~\ref{trfo}}, then the equality
\begin{multline}\label{TN}
\frac{d}{dt}\Biggl[f_2\left(\partial_{3}L+\frac{d g}{d\dot{q}}{_t D^{\alpha}_b}\partial_{5} L\right)
+\sum_{r=0}^{\infty}\Bigl((-1)^{r}\partial_{4} L^{(r)}
\cdot {_aI_t}^{r+1-\alpha}(f_2-f_2(a))\\
+f_2^{(r)}\cdot {_tI_b}^{r+1-\alpha}\partial_{4} L\Bigr)\Biggr] = 0,
\end{multline}
where $f_2=\frac{\partial \phi_1}{\partial
\varepsilon}(0,q)$, holds for any solutions $q(\cdot)$, $t \in [a,b]$ of the Euler--Lagrange equation \eqref{t2}.
\end{theorem}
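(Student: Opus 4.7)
The plan is to start from the pointwise necessary condition for invariance \eqref{eq:cnsidf11}, which holds pointwise on $[a,b]$ because the integrated invariance identity is valid on every subinterval $[t_a,t_b]\subseteq[a,b]$, and to reorganise its six terms into a single total $t$-derivative. The conservation law will then follow immediately from the fact that a vanishing function is the derivative of a constant. All six terms are evaluated along a solution $q$ of the Euler--Lagrange equation \eqref{t2}, which is where that equation enters (it was already used in deriving \eqref{eq:cnsidf11}).

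Two of the three resulting total derivatives come from the classical Leibniz rule on products of ordinary functions of $t$. The first and second terms in \eqref{eq:cnsidf11} combine as $\frac{d}{dt}(f_2\,\partial_{3}L)$. The fifth and sixth terms combine, once one reads the ``$_t^CD_b^\alpha\partial_5 L$'' in \eqref{eq:cnsidf11} as the Riemann--Liouville right derivative ${_tD_b^\alpha}\partial_5 L$ coming from integration by parts via Theorem~\ref{thm:ml:03}, into $\frac{d}{dt}\!\bigl(f_2\cdot \tfrac{dg}{d\dot q}\,{_tD_b^\alpha}\partial_5 L\bigr)$. Together these give precisely the first bracket $f_2\bigl(\partial_3 L+\tfrac{dg}{d\dot q}\,{_tD_b^\alpha}\partial_5 L\bigr)$ appearing in \eqref{TN}.

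The remaining middle pair,
\begin{equation*}
\partial_{4}L\cdot {_a^CD_t^\alpha}\!\Bigl(\tfrac{df}{dq}f_2\Bigr)\;-\;\tfrac{df}{dq}f_2\cdot {_tD_b^\alpha}\partial_{4}L,
\end{equation*}
is exactly of the form $g\cdot{_a^CD_t^\alpha}f-f\cdot{_tD_b^\alpha}g$ for which Lemma~\ref{trfo} is designed. Applying the Transfer Formula with $g\mapsto \partial_4 L$ and $f\mapsto \tfrac{df}{dq}f_2$, and invoking the assumed convergence condition $(\mathcal{C})$, rewrites this difference as $\tfrac{d}{dt}$ of the infinite-series expression on the right-hand side of \eqref{TN}. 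Summing the three total derivatives and using the invariance identity to set the sum equal to zero shows that the bracket in \eqref{TN} is $t$-independent along any solution of \eqref{t2}, which is the statement.

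The main delicate point is bookkeeping around the smooth factor $\tfrac{df}{dq}$ inside the Caputo derivative: the Transfer Formula's output literally involves $\bigl(\tfrac{df}{dq}f_2\bigr)^{(r)}$ and $\bigl(\tfrac{df}{dq}f_2\bigr)(a)$, whereas \eqref{TN} displays $f_2^{(r)}$ and $f_2(a)$. Reconciling these either requires relabelling $f_2$ to absorb $\tfrac{df}{dq}$, or the natural specialisation $f=\mathrm{id}$, and this should be spelled out explicitly in the write-up. Beyond that, the only analytic subtlety is verifying $(\mathcal{C})$ along an actual extremal, which the hypothesis already assumes and hence needs no further work.
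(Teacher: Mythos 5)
Your proposal is correct and takes essentially the same route as the paper: the paper's own proof is the single line that the result ``follows directly by using Lemma~\ref{trfo} into equation \eqref{eq:cnsidf11}'', and your reorganisation of the six terms (Leibniz rule on the two ordinary-derivative pairs, Transfer Formula on the middle pair, with the ${_t^CD_b^\alpha}$ in the last term read as the Riemann--Liouville derivative produced by Theorem~\ref{thm:ml:03}) is exactly that computation made explicit. The $\frac{df}{dq}$ bookkeeping issue you flag is genuine but lies in the paper's own statement of \eqref{TN} rather than in your argument (the Transfer Formula really yields $\frac{df}{dq}f_2$ where \eqref{TN} writes $f_2$, which is harmless only when $f=\mathrm{id}$ or when one relabels $f_2$ accordingly), so your write-up is, if anything, more careful than the original.
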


\begin{proof}
The proof of our fractional Noether-type theorem without time transformation follows directly by using Lemma~\ref{trfo} into the equation \eqref{eq:cnsidf11}.
\end{proof}

In Definition \ref{def:inv1:MR} and in the Theorem \ref{theo:tnadf1} we consider only the particular case without time transformation. Let us consider now the more general case including both space and time transformations.  Firstly, we should formulate a general notion of invariance for the functional \eqref{t1} under the action of a one-parameter group of diffeomorphisms including both space and time transformations:

\begin{definition}(General invariance).
\label{def:invadf}
Let $\Phi_{i=1,2}=\left\{\phi_i(\varepsilon,\cdot)\right\}_{\varepsilon\in\mathbb{R}}$ be a one-parameter group of diffeomorphisms of $\mathbb{R}^{2}$ with $\overline{q}=\phi_1(\varepsilon,q)$ and $\bar{t}=\phi_2(\varepsilon,t)$. The functional \eqref{t1} is $\varepsilon$-invariant under the action of $\Phi_{i=1,2}=\left\{\phi_i(\varepsilon,\cdot)\right\}_{\varepsilon\in\mathbb{R}}$ if it satisfies
\begin{multline}
\label{eq:invdf1}
\int_{t_{a}}^{t_{b}}L \left(t,q,\dot{q},{_a^CD_t^\alpha f(q)},{_a^CD_t^\alpha g(\dot{q})}\right) dt=\int_{\bar{t}_{\bar{a}}}^{\bar{t}_{\bar{b}}}L \left(\bar{t},\bar{q},\dot{\bar{q}},{_{\bar{a}}^CD_{\bar{t}}^\alpha f(\bar{q})},{_{\bar{a}}^CD_{\bar{t}}^\alpha g(\dot{\bar{q}})}\right) d\bar{t}\\
= \int_{\phi_2(\varepsilon,t_{a})}^{\phi_2(\varepsilon,t_{b})}
L\left(\phi_2(\varepsilon,t),\phi_1(\varepsilon,q(t)),\frac{\dot{\phi_1}(\varepsilon,q(t))}{\dot{\phi_2}(\varepsilon,t)},{_{\bar{a}}^CD_{\bar{t}}^\alpha
f(\phi_1(\varepsilon,q(t)))},{_{\bar{a}}^CD_{\bar{t}}^\alpha
g(\dot{\phi_1}(\varepsilon,q(t)))}\right)\dot{\phi_2}(\varepsilon,q(t))dt
\end{multline}
for any solution $q(\cdot)$ of \eqref{t2}, and for any subinterval $[{t_{a}},{t_{b}}] \subseteq [a,b]$. In \eqref{eq:invdf1} we denote $\dot{\bar{q}}=\frac{d\bar{q}}{d\bar{t}}$, $\dot{\phi_i}=\frac{d\phi_i}{dt}$  ($i=1,2$), and $\bar{a}=\phi_2(\varepsilon,t_{a})$.
\end{definition}

We can now extend our previous Theorem \ref{theo:tnadf1} for the more general case including both space and time transformations. The generalized Noether-type theorem for fractional variational problems depending on fractional derivatives of functions is given by:
\begin{theorem}(Noether-type theorem with fractional derivatives of functions).
\label{theo:tndf} 
Let $S$ be a functional given by \eqref{t1} with $n=1$. If $S$ is invariant in the meaning of \textup{Definition~\ref{def:invadf}}, and if $\frac{\partial
\phi_1}{\partial \varepsilon}(0,q)$ and $\partial_{4} L$ satisfy the
condition $(\mathcal{C})$ in \textup{Lemma~\ref{trfo}}, then the equality
\begin{multline}
\label{eq:LC:Frac:RL1} \frac{d}{dt}\Biggl[f_2\left(\partial_{3}L+\frac{d g}{d\dot{q}}{_t D^{\alpha}_b}\partial_{5} L\right)
+\sum_{r=0}^{\infty}\Bigl((-1)^{r}\partial_{4} L^{(r)}
\cdot {_aI_t}^{r+1-\alpha}(f_2-f_2(a))
+f_2^{(r)}\cdot {_tI_b}^{r+1-\alpha}\partial_{4} L\Bigr)\\
+ \tau\Bigl(L-\dot{q}\cdot\partial_{3} L -\alpha\left(\partial_{4}
L\cdot{_a^CD_t^\alpha f(q)}+\partial_{5}
L\cdot{_a^CD_t^\alpha g(\dot{q})}\right)-\partial_{5}
L\cdot{_a^CD_t^\alpha \dot{q}\frac{dg}{d\dot{q}}}\Bigr)\Biggr]= 0
\end{multline}
holds along any solutions $q(\cdot)$, $t \in [a,b]$ of  the Euler--Lagrange equation \eqref{t2}. In \eqref{eq:LC:Frac:RL1} we denote $f_2=\frac{\partial \phi_1}{\partial
\varepsilon}(0,q)$ and $\tau=\frac{\partial \phi_2}{\partial
\varepsilon}(0,t)$.
\end{theorem}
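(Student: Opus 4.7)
The plan is to reduce the theorem to the case without time transformation already handled by Theorem~\ref{theo:tnadf1}, via the classical Jost time-reparametrization technique adapted to the fractional setting in the spirit of \cite{FredericoLazo}. First I would introduce a new parameter $\lambda\in[\lambda_a,\lambda_b]$, treat the time $t$ as an additional dependent variable $t(\lambda)$ with positive derivative $t'_\lambda=dt/d\lambda$, and rewrite the functional \eqref{t1} (with $n=1$) as
\[
\bar S = \int_{\lambda_a}^{\lambda_b} L\!\left(t,\, q,\, \frac{q'_\lambda}{t'_\lambda},\; {_{t(\lambda_a)}^CD_{t(\lambda)}^\alpha f(q)},\; {_{t(\lambda_a)}^CD_{t(\lambda)}^\alpha g\!\left(\frac{q'_\lambda}{t'_\lambda}\right)}\right) t'_\lambda\, d\lambda .
\]
An $\varepsilon$-invariance of $S$ in the sense of Definition~\ref{def:invadf} then translates into an $\varepsilon$-invariance of $\bar S$ without time transformation in the enlarged configuration space $(t(\lambda),q(\lambda))$, under the action of $(\phi_2,\phi_1)$ viewed as a pure spatial diffeomorphism in $\lambda$.

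Next, I would apply Theorem~\ref{theo:tnadf1} to $\bar S$, which yields a conserved quantity with two components. Specializing to $\lambda=t$ at the end (so that $t'_\lambda=1$ and $t(\lambda_a)=a$), all Caputo operators collapse back to $_a^CD_t^\alpha$: the $q$-component reproduces the first bracket of \eqref{eq:LC:Frac:RL1} with $f_2=\partial\phi_1/\partial\varepsilon(0,q)$, while the $t$-component delivers the extra $\tau$-contribution with $\tau=\partial\phi_2/\partial\varepsilon(0,t)$.

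The main technical obstacle is tracking how the reparametrization affects the Caputo derivatives and the quotient $\dot q=q'_\lambda/t'_\lambda$ appearing both as the third slot of $L$ and inside $g$. Expanding the kernel $(t(\lambda)-t(\mu))^{-\alpha}/\Gamma(1-\alpha)$ in \eqref{a7} to first order in $\varepsilon$ when $t$ is transported by $\phi_2$, the homogeneity of degree $-\alpha$ of this kernel produces a factor $\alpha$ multiplying both $\partial_4 L\cdot{_a^CD_t^\alpha f(q)}$ and $\partial_5 L\cdot{_a^CD_t^\alpha g(\dot q)}$, while differentiating $q'_\lambda/t'_\lambda$ inside $g$ yields the additional piece $\partial_5 L\cdot{_a^CD_t^\alpha(\dot q\, dg/d\dot q)}$. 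Together with the standard energy-like contribution $\tau(L-\dot q\cdot\partial_3 L)$ produced by the Jacobian $t'_\lambda$ and by the quotient $q'_\lambda/t'_\lambda$ in the third slot, these rearrange precisely into the bracket multiplying $\tau$ in \eqref{eq:LC:Frac:RL1}. A final application of Lemma~\ref{trfo} to the $f_2\,\partial_4 L$ pairing, combined with \eqref{t2} to eliminate the residual bulk term, assembles the identity into the total-derivative form \eqref{eq:LC:Frac:RL1}. I expect the rigorous expansion of the Caputo kernel responsible for the explicit factor $\alpha$ to be the most delicate step; the remainder is a direct appeal to Theorem~\ref{theo:tnadf1} and routine bookkeeping.
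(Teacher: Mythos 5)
Your proposal follows essentially the same route as the paper's proof: the time-reparametrization (Jost) trick that promotes $t$ to a dependent variable, reduction to the no-time-transformation Theorem~\ref{theo:tnadf1} applied to the reparametrized functional, and the key computation that the $(t'_{\sigma})^{-\alpha}$ scaling of the Caputo derivative under reparametrization produces the explicit factor $\alpha$ and the extra term $\partial_{5}L\cdot{_a^CD_t^\alpha}\bigl(\dot{q}\,\tfrac{dg}{d\dot q}\bigr)$ when differentiating with respect to $t'_{\sigma}$ and setting $t'_{\sigma}=1$. The only cosmetic difference is that you invoke Lemma~\ref{trfo} and equation~\eqref{t2} once more at the end, whereas in the paper these are already absorbed in the application of Theorem~\ref{theo:tnadf1}.
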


\begin{proof}
In order to proof our generalized Noether-type theorem, we extend the procedure used in our previous work \cite{FredericoLazo} and in \cite{CD:Aves:1986}.
Firstly, we parametrize the time $t\in [a,b]$ by using a Lipschitz transformation as follow:
\begin{equation*}
t\longmapsto \sigma h(\lambda) \in [\sigma_{a},\sigma_{b}],\,\, \mbox{with} \,\, t(\sigma_{a})=a,\,\, t(\sigma_{b})=b, \end{equation*}
where we impose the condition 
\begin{equation}
\label{eq:condla} t_{\sigma}^{'}|_{\lambda=0} =\left.\frac{dt(\sigma)}{d\sigma}\right|_{\lambda=0}=h(0) = 1.
\end{equation}
By using this parametrization, the functional $S[q(\cdot)]$ defined in \eqref{t1} can be rewritten as an autonomous functional $\bar{S}[t(\cdot),q(t(\cdot))]$ given by
\begin{multline}
\label{eq:tempo}
S[q(\cdot)]\longmapsto \bar{S}[t(\cdot),q(t(\cdot))]\\
= \int_{\sigma_{a}}^{\sigma_{b}}
L\left(t(\sigma),q(t(\sigma)),\dot{q}(t(\sigma)),
{_{\sigma_{a}}^CD_{t(\sigma)}^{\alpha}f(q(t(\sigma)))},{_{\sigma_{a}}^CD_{t(\sigma)}^{\alpha}g(\dot{q}(t(\sigma)))}
\right)t_{\sigma}^{'} d\sigma.
\end{multline}
We now need to rewrite the Caputo fractional derivatives in \eqref{eq:tempo}. From the definitions presented in Section \ref{sec:fdRL} we obtain
\begin{equation}
\lb{eq:cap01}
\begin{split}
_{\sigma_{a}}^C&D_{t(\sigma)}^{\alpha}f(q(t(\sigma)))\hspace*{-0.5cm}\\
&= \frac{1}{\Gamma(1-\alpha)}\frac{d}{dt(\sigma)} \int_{\frac{a}{h(\lambda)}}^{\sigma
h(\lambda)}\left({\sigma
h(\lambda)}-\theta\right)^{-\alpha}
\left[f\left(q\left(\theta h^{-1}(\lambda)\right)\right)-f\left(q\left(\frac{a}{h(\lambda)}\right)\right)\right]d\theta    \\
&= \frac{(t_{\sigma}^{'})^{-\alpha}}{\Gamma(1-\alpha)}\frac{d}{d\sigma}
\int_{\frac{a}{(t_{\sigma}^{'})^{2}}}^{\sigma}
(\sigma-s)^{-\alpha}\left(
f(q(s))-f\left(q\left(\frac{a}{(t_{\sigma}^{'})^2}\right)\right)\right)ds  \\
&= (t_{\sigma}^{'})^{-\alpha}\,\,{{^C_\chi
D_{\sigma}^{\alpha}f(q(\sigma))}},\,\left(
\chi={\frac{a}{(t_{\sigma}^{'})^{2}}}\right)\, ,
\end{split}
\end{equation}
and, by a similar development,
\begin{equation}
\lb{eq:cap02}
_{\sigma_{a}}^CD_{t(\sigma)}^{\alpha}g(\dot{q}(t(\sigma)))=(t_{\sigma}^{'})^{-\alpha}\,\,{{^C_\chi
D_{\sigma}^{\alpha}g\left(\frac{q_{\sigma}^{'}}{t_{\sigma}^{'}}\right)}} \, .
\end{equation}
Inserting \eqref{eq:cap01} and \eqref{eq:cap02} into \eqref{eq:tempo} we get
\begin{equation*}
\begin{split}
&\bar{S}[t(\cdot),q(t(\cdot))] \\
&= \int_{\sigma_{a}}^{\sigma_{b}}
L\left(t(\sigma),q(t(\sigma)),\frac{q_{\sigma}^{'}}{t_{\sigma}^{'}},(t_{\sigma}^{'})^{-\alpha}\,\,{^C_\chi
D_{\sigma}^{\alpha}f(q(\sigma))},(t_{\sigma}^{'})^{-\alpha}\,\,{{^C_\chi
D_{\sigma}^{\alpha}g\left(\frac{q_{\sigma}^{'}}{t_{\sigma}^{'}}\right)}}\right) t_{\sigma}^{'} d\sigma \\
&\doteq \int_{\sigma_{a}}^{\sigma_{b}}
\bar{L}_{f}\left(t(\sigma),q(t(\sigma)),q_{\sigma}^{'},t_{\sigma}^{'},{^C_\chi
D_{\sigma}^{\alpha}f(q(\sigma))},{{^C_\chi
D_{\sigma}^{\alpha}g\left(\frac{q_{\sigma}^{'}}{t_{\sigma}^{'}}\right)}}\right)d\sigma \\
&= \int_a^b L \left(t,q,\dot{q},{_a^CD_t^\alpha f(q)},{_a^CD_t^\alpha g(\dot{q})}\right) dt \\
&= S[q(\cdot)] \, .
\end{split}
\end{equation*}
Since $S[q(\cdot)]$ defined in \eqref{t1} is an invariant functional in the meaning of
Definition~\ref{def:invadf}, the autonomous functional $\bar{S}[t(\cdot),q(t(\cdot))]$ in
\eqref{eq:tempo} will be invariant in the meaning of
Definition~\ref{def:inv1:MR}. Consequently, from
Theorem~\ref{theo:tnadf1} we have
\begin{multline}
\label{eq:tnadf2} \frac{d}{dt}\Biggl[f_2\left(\partial_{3}
\bar{L}_{f}+\frac{dg}{d(\frac{q_{\sigma}^{'}}{t_{\sigma}^{'}})}
{^C_{\sigma}
D_{\nu}^{\alpha}\partial_{6}
\bar{L}_{f}}\right)\\+\tau\frac{\partial}{\partial t'_\sigma} \bar{L}_{f}
+\sum_{r=0}^{\infty}\Bigl((-1)^{r}
\partial_{5} \bar{L}_{f}^{(r)}\cdot {_aI_t}^{r+1-\alpha}(f_2-f_2(a))
+f_2^{(r)}\cdot {_tI_b}^{r+1-\alpha}\partial_{5}
\bar{L}_{f}\Bigr)\Biggr]= 0 \, ,
\end{multline}
where $\nu=\frac{b}{(t_{\sigma}^{'})^{2}}\,.$

The last step in our proof consist in set $\lambda = 0$. In this case, from \eqref{eq:condla} we get
\begin{equation*}
\begin{cases}
{^C_\chi D_{\sigma}^{\alpha}f(q(\sigma))}
 = {_a^CD_t^\alpha f(q(t))}\\
 _{\sigma_{a}}^CD_{t(\sigma)}^{\alpha}g(\dot{q}(t(\sigma)))={_a^CD_t^\alpha g(\dot{q}(t))}\\
{^C_{\sigma}
D_{\nu}^{\alpha}\partial_{6}
\bar{L}_{f}}={^C_t
D_{b}^{\alpha}\partial_{5}
L}\\
\frac{dg}{d(\frac{q_{\sigma}^{'}}{t_{\sigma}^{'}})}=\frac{dg}{d\dot{q}}
 \end{cases}
\end{equation*}
and, consequently,
\begin{equation}
\label{eq:prfMR:q1}
\begin{cases}
\partial_{3}\bar{L}_{f}
=\partial_{3} L,\\
\partial_{5}\bar{L}_{f}
=\partial_{4} L,
\end{cases}
\end{equation}
and
\begin{equation}
\label{eq:prfMR:q2}
\begin{split}
&\frac{\partial}{\partial t'_\sigma} \bar{L}_{f} = L +
\partial_{3}{\bar{L}_{f}}
\frac{\partial}{\partial t_{\sigma}^{'}}\left(
\frac{q_{\sigma}^{'}}{t_{\sigma}^{'}}\right)t_{\sigma}^{'}+\partial_{5}{\bar{L}_{f}}
 \frac{\partial}{\partial t_{\sigma}^{'}}\left[(t_{\sigma}^{'})^{-\alpha}\,\,{^C_\chi
D_{\sigma}^{\alpha}f(q(\sigma))}\right]t_{\sigma}^{'}\\
&+\partial_{6}{\bar{L}_{f}}
 \frac{\partial}{\partial t_{\sigma}^{'}}\left[(t_{\sigma}^{'})^{-\alpha}\,\,{{^C_\chi
D_{\sigma}^{\alpha}g\left(\frac{q_{\sigma}^{'}}{t_{\sigma}^{'}}\right)}}\right]t_{\sigma}^{'}\\
&=L-\dot{q}\cdot\partial_{3} L -\alpha\left(\partial_{4}
L\cdot{_a^CD_t^\alpha f(q)}+\partial_{5}
L\cdot{_a^CD_t^\alpha g(\dot{q})}\right)-\partial_{5}
L\cdot{_a^CD_t^\alpha \dot{q}\frac{dg}{d\dot{q}}}  \, .
\end{split}
\end{equation}
Finally, by inserting \eqref{eq:prfMR:q1} and \eqref{eq:prfMR:q2} into relation \eqref{eq:tnadf2} we obtain \eqref{eq:LC:Frac:RL1}.
\end{proof}

An important particular case of our generalized Noether-type theorem~\ref{theo:tndf} is obtained when we consider autonomous problems. In this case, if the Lagrangian function does not depend explicitly on the time $t$, our variational problem reduces to
\begin{gather}
S[q(\cdot)] =\int_a^b L\left(q,\dot{q},{_a^CD_t^\alpha f(q)},{_a^CD_t^\alpha g(\dot{q})}\right) dt
\longrightarrow \min. \label{eq:FOCP:CO4}
\end{gather}
Furthermore, in such circumstances, from our generalized Noether-type theorem we obtain the following important corollary
\begin{corollary}(Conservation law under time translations for autonomous problems).
\label{cor:FOCP:CV}
Let $S$ be an autonomous functional given by the problem \eqref{eq:FOCP:CO4}, and let us consider the time translation transformation given by $\phi_1(\varepsilon,q(t))= q(t)$ and $\phi_2(\varepsilon,t) = t+\varepsilon$. If $S$ is invariant in the meaning of \textup{Definition~\ref{def:invadf}}, then the equality
\begin{equation}
\label{eq:ATT}
 \frac{d}{dt}\Bigl(L-\dot{q}\cdot\partial_{3} L -\alpha\left(\partial_{4}
L\cdot{_a^CD_t^\alpha f(q)}+\partial_{5}
L\cdot{_a^CD_t^\alpha g(\dot{q})}\right)-\partial_{5}
L\cdot{_a^CD_t^\alpha \dot{q}\frac{dg}{d\dot{q}}}\Bigr)= 0
\end{equation}
holds along any solutions $q(\cdot)$ of the Euler--Lagrange equation \eqref{t2}, and for any $t \in [a,b]$.

\end{corollary}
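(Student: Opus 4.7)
The plan is to deduce Corollary \ref{cor:FOCP:CV} as a direct specialization of Theorem \ref{theo:tndf} applied to the pure time-translation transformation, so the strategy is simply to substitute the infinitesimal generators into \eqref{eq:LC:Frac:RL1} and check that all terms involving the spatial generator vanish. Concretely, for the one-parameter group $\phi_1(\varepsilon,q(t))=q(t)$, $\phi_2(\varepsilon,t)=t+\varepsilon$, I would compute
\begin{equation*}
f_2=\frac{\partial \phi_1}{\partial \varepsilon}(0,q)=0,\qquad \tau=\frac{\partial \phi_2}{\partial \varepsilon}(0,t)=1,
\end{equation*}
and observe that the autonomous functional \eqref{eq:FOCP:CO4} is a special case of \eqref{t1} (with $n=1$ and with $L$ not depending explicitly on $t$), so Theorem~\ref{theo:tndf} applies provided its hypotheses hold.

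Next, I would verify that condition $(\mathcal{C})$ from Lemma~\ref{trfo} is satisfied trivially: since $f_2\equiv 0$, all the derivatives $f_2^{(k)}$ vanish identically, so the sequences $\bigl(f_2^{(k)}\cdot {_tI_b^{k-\alpha}}\,\partial_{4}L\bigr)$ and $\bigl(\partial_{4}L^{(k)}\cdot {_aI_t^{k-\alpha}}(f_2-f_2(a))\bigr)$ are identically zero, and therefore converge uniformly to $0$ on $[a,b]$ without any further hypothesis on $\partial_{4}L$. Consequently, the conclusion \eqref{eq:LC:Frac:RL1} of the Noether-type theorem is available along every solution of \eqref{t2}.

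Finally I would substitute $f_2=0$ and $\tau=1$ into \eqref{eq:LC:Frac:RL1}. The first bracketed term $f_2\bigl(\partial_{3}L+\tfrac{dg}{d\dot q}\,{_tD_b^\alpha}\partial_{5}L\bigr)$ vanishes, and the whole infinite sum inside the bracket vanishes for the same reason. What remains is exactly
\begin{equation*}
\frac{d}{dt}\Bigl[L-\dot q\cdot \partial_{3}L-\alpha\bigl(\partial_{4}L\cdot {_a^CD_t^\alpha f(q)}+\partial_{5}L\cdot {_a^CD_t^\alpha g(\dot q)}\bigr)-\partial_{5}L\cdot {_a^CD_t^\alpha}\Bigl(\dot q\tfrac{dg}{d\dot q}\Bigr)\Bigr]=0,
\end{equation*}
which is precisely \eqref{eq:ATT}. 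There is no genuine obstacle here, since the bulk of the work was already done in proving Theorem~\ref{theo:tndf}; the only point that requires a sentence of care is verifying that condition $(\mathcal{C})$ is automatic for $f_2\equiv 0$, so that the hypotheses of Theorem~\ref{theo:tndf} are genuinely met in this corollary and no extra assumption on $\partial_{4}L$ needs to be imposed.
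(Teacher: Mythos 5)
Your proposal is correct and follows essentially the same route as the paper: specialize Theorem~\ref{theo:tndf} to $f_2=\frac{\partial \phi_1}{\partial \varepsilon}(0,q)=0$ and $\tau=\frac{\partial \phi_2}{\partial \varepsilon}(0,t)=1$, so that all $f_2$-terms and the infinite sum in \eqref{eq:LC:Frac:RL1} drop out, leaving \eqref{eq:ATT}. Your explicit remark that condition $(\mathcal{C})$ is automatically satisfied when $f_2\equiv 0$ is a small point the paper leaves implicit (the paper instead spends its effort checking, via a change of variables in the Caputo integral, that the time-translated fractional terms coincide with the original ones), but both arguments are in substance the same specialization of the general Noether-type theorem.
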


\begin{proof}
The proof follows from our more general Theorem \ref{theo:tndf}. Firstly, it is easy to see that the autonomous functional \eqref{eq:FOCP:CO4} is invariant under time translations since, in this case, we have $\frac{d\phi_2}{dt}(\varepsilon,q(t)) = 1$ in \eqref{eq:invdf1}, and the Lagrangian function $L$ does not depend explicity on $t$. Consequently, the condition \eqref{eq:LC:Frac:RL1} holds along any solutions $q(\cdot)$ of the Euler--Lagrange equation \eqref{t2} for the autonomous problem \eqref{eq:FOCP:CO4}. In order to complete the proof, we should show that, in this case, \eqref{eq:LC:Frac:RL1} reduces to \eqref{eq:ATT}. This can be done by observing that $f_2=\frac{\partial \phi_1}{\partial
\varepsilon}(0,q)=0$, $\tau=\frac{\partial \phi_2}{\partial
\varepsilon}(0,t)=1$, and 
\begin{equation*}
\begin{split}
{_{\bar{a}}^CD_{\bar{t}}^\alpha
f(\phi_1(\varepsilon,q(t))})
&=\frac{1}{\Gamma(1-\alpha)}\frac{d}{dt }\int_{\bar{a}}^{\bar{t}}
(\bar{t}-\theta)^{-\alpha}(
f(\phi_1(\varepsilon,q(\theta)))-f(q(\bar{a})))d\theta
\\
&=\frac{1}{\Gamma(1-\alpha)}\frac{d}{dt } \int_{a+\varepsilon}^{t+\varepsilon}
(t+\varepsilon-\theta)^{-\alpha}(
f(\phi_1(\varepsilon,q(\theta)))-f(q(a+\varepsilon)))d\theta\\
&=\frac{1}{\Gamma(1-\alpha)}\frac{d}{dt } \int_{a}^{t}
(t-s)^{-\alpha}\left(
f(\phi_1(\varepsilon,q(s+\varepsilon)))-f(q(a)\right)ds\\
&={_a^CD^\alpha_{t}f(\phi_1(\varepsilon,q(t+\varepsilon)))}={_a^CD^\alpha_{t}f(\phi_1(\varepsilon,q(t})))\\
&={_a^CD^\alpha_{t}f({q}(t))}\, .
\end{split}
\end{equation*}
\end{proof}

The importance of Corollary \ref{cor:FOCP:CV} to study nonconservative and nonlinear systems will be displayed in the next section where we analyze some examples of nonlinear chaotic jerk systems. Furthermore, it is important to notice that in the integer order derivative case ($\alpha=1$) the condition \eqref{eq:ATT} is reduced to the well-known law of conservation of the energy $E$
\begin{equation*}
 \frac{d}{dt}\Bigl(L-\dot{q}\cdot\frac{\partial L}{\partial \dot{q}}\Bigr)= 0\,\, \longrightarrow \,\, E=L-\dot{q}\cdot\frac{\partial L}{\partial \dot{q}}=\mbox{constant}
\end{equation*}
of classical mechanics.


\section{Lagrangian For Nonlinear Chaotic Jerk Systems}
\label{sec:EX}

As examples for applications of our generalized fractional Noether-type theorem \ref{theo:tndf}, in this section we obtain conserved quantities for some jerk systems. In particular, we consider three cases. In the first two, we investigate the conserved quantities related to the symmetries under time translation. The third example illustrates a conservation law related to internal (global) symmetry.

\subsection{Example 1: nonlinearity depending only on $x$}

Let us consider first some autonomous systems. The simplest one-dimensional family of jerk systems that displays chaotic solutions is given by \cite{jerk,jerk1,jerk2a,jerk2b,jerk2c,jerk2d,jerk2e,jerk2f,jerk2g}
\beq
\lb{c5}
\dddot{x}+A\ddot{x}+\dot{x}=G(x),
\eeq
where $A$ is a system parameter, and $G(x)$  is a nonlinear function containing one nonlinearity, one system parameter, and a constant term. We can formulate a Lagrangian for this jerk system by
\begin{equation}
\lb{c6}
L\left(x,\dot{x},{_a^C D^{\frac{1}{2}}_t} x,{_a^C D^{\frac{1}{2}}_t} \dot{x}\right)=\frac{A}{2}\left(\dot{x}\right)^2-\frac{1}{2}\left({_a^C D^{\frac{1}{2}}_t} \dot{x}\right)^2+\frac{1}{2}\left({_a^C D^{\frac{1}{2}}_t} x\right)^2+\int G(x)dx.
\end{equation}
It is important to stress that this Lagrangian function \eqref{c6} is different from the one in \cite{LazoCesar} and has the advantage of being a real function (the Lagrangian in  \cite{LazoCesar} is a complex valued function). In order to show that \rf{c6} give us \rf{c5}, we insert \rf{c6} into our generalized Euler-Lagrange equation \rf{t2}, obtaining
\beq
\lb{c7}
-\frac{d}{dt}\left({_t D^{\frac{1}{2}}_b}\; {_a^C D^{\frac{1}{2}}_t}\right) \dot{x}+A\ddot{x}-\left({_t D^{\frac{1}{2}}_b}\; {_a^C D^{\frac{1}{2}}_t}\right) x=G(x),
\eeq
and we follow the procedure introduced in \cite{LazoCesar}. Since (see \cite{LazoCesar})
\beq
\lb{c7b}
\lim_{\begin{array}{c}
a\rightarrow b \\
t=\frac{a+b}{2}
\end{array}}{_t D^{\frac{1}{2}}_b}\; {_a^C D^{\frac{1}{2}}_t}(\cdot)=-\frac{d}{dt}(\cdot),
\eeq
by taking the limit $a\rightarrow b$ with $t=(a+b)/2$ in \rf{c7} and using \rf{c7b} we get \rf{c5}.

An example of a real-world jerk system described by \eqref{c5} is an accelerated charged particle under a conservative force $G(x)$ and a frictional force proportional to the velocity. In this case, the first term in the Lagrangian \eqref{c6} is the kinetic energy of a particle of mass $m=-A$, the last term is minus the potential energy of the conservative force $-G(x)$, and the two terms containing fractional derivatives can be regarded as potential energies for non-conservatives forces. In particular, the third term can be interpreted as the potential energy of the frictional force $\dot{x}$, since, for $\Delta t=b-a<<1$, we get
\beq
\lb{c8}
\frac{1}{2}\left({_a^C D^{\frac{1}{2}}_t} x\right)^2\approx \frac{1}{2}\left(\frac{\Gamma(1)}{\Gamma(\frac{3}{2})}\right)(\dot{x})^2\Delta t \approx \frac{2}{\pi}\dot{x}\Delta x,
\eeq
that, apart a multiplicative constant $\frac{2}{\pi}$, coincides with the work done from the frictional force $\dot{x}$ in the displacement $\Delta x \approx \dot{x}\Delta t$. Finally, the second term in the Lagrangian \eqref{c6} can be interpreted as the potential energy for the radiation recoil force, since
\beq
\lb{c8b}
\frac{1}{2}\left({_a^C D^{\frac{1}{2}}_t} \dot{x}\right)^2\approx \frac{1}{2}\left(\frac{\Gamma(1)}{\Gamma(\frac{3}{2})}\right)(\ddot{x})^2\Delta t = \frac{2}{\pi}(\ddot{x})^2\Delta t,
\eeq
that coincides, again apart a constant $\frac{2}{\pi}$, with the energy lost in the time interval $\Delta t$ by the radiation recoil force \cite{LazoCesar} of a particle of charge $e=\pm\sqrt{\frac{3c^3}{2}}$, where $c$ is the speed of light. If we define the canonical variables
\beq
\lb{c9}
q_1=\dot{x}, \;\;\; q_{\frac{1}{2}}={_a^C D^{\frac{1}{2}}_t} x, \;\;\; q_{\frac{3}{2}}={_a^C D^{\frac{1}{2}}_t} \dot{x},
\eeq
and
\beq
\lb{c10}
p_1=\frac{\partial L}{\partial q_1}=A\dot{x}, \;\;\; q_{\frac{1}{2}}=\frac{\partial L}{\partial q_{\frac{1}{2}}}={_a^C D^{\frac{1}{2}}_t} x, \;\;\; q_{\frac{3}{2}}=\frac{\partial L}{\partial q_{\frac{3}{2}}}=-{_a^C D^{\frac{1}{2}}_t} \dot{x},
\eeq
we obtain the Hamiltonian
\beq
\lb{c11}
H=q_1p_1+q_{\frac{1}{2}}p_{\frac{1}{2}}+q_{\frac{3}{2}}p_{\frac{3}{2}}-L=\frac{A}{2}\left(\dot{x}\right)^2-\frac{1}{2}\left({_a^C D^{\frac{1}{2}}_t} \dot{x}\right)^2+\frac{1}{2}\left({_a^C D^{\frac{1}{2}}_t} x\right)^2-\int G(x)dx
\eeq
that is the sum of all energies. From Corollary \ref{cor:FOCP:CV} and the Lagrangian \eqref{c6} we obtain the following conserved quantity under time translations ($\phi_1(\varepsilon,x(t))= x(t)$ and $\phi_2(\varepsilon,t) = t+\varepsilon$)
\beq
\lb{c12}
\frac{d}{dt}\left[H+\frac{3}{2}\left({_a^C D^{\frac{1}{2}}_t} \dot{x}\right)^2-\frac{1}{2}\left({_a^C D^{\frac{1}{2}}_t} x\right)^2-A\left(\dot{x}\right)^2+2\int G(x)dx\right]=0.
\eeq
Then, the conserved quantity under time translation is given by
\beq
\lb{c13}
\frac{A}{2}\left(\dot{x}\right)^2-\left({_a^C D^{\frac{1}{2}}_t} \dot{x}\right)^2-\int G(x)dx=\mbox{constant},
\eeq
where we used \eqref{c11} in \eqref{c12}. Finally, it is evident from the condition \eqref{c12} that the total energy (given by the Hamiltonian) is not a conserved quantity, as we should expected for a system under non-conservative forces. Actually, only locally (in time) the energy is conserved. If we consider very short time intervals, by taking the limit $a\rightarrow b$, we have ${_a^CD_t}^{\frac{1}{2}}x\rightarrow 0$ and ${_a^CD_t}^{\frac{1}{2}}\dot{x}\rightarrow 0$. In this case \eqref{c11} and \eqref{c12} reduces to $H=\frac{A}{2}\left(\dot{x}\right)^2-\int G(x)dx$ and $\frac{dH}{dt} = 0$, respectively.

\subsection{Example 2: nonlinearity depending on $x$, $\dot{x}$ and $\ddot{x}$}

It is important to stress that \rf{c5} is the only chaotic jerk system containing nonlinearity depending only on $x$ \cite{jerk,jerk1,jerk2a,jerk2b,jerk2c,jerk2d,jerk2e,jerk2f,jerk2g}. For jerk systems with more complex nonlinearities, as for example $x\dot{x}$, $\dot{x}^2$ and $x\ddot{x}$, it is not possible to formulate a simple Lagrangian, depending only on $x$ and its derivatives, by using classical calculus of variation. However, by using our Euler-Lagrange equation \rf{t2} we can formulate a simple Lagrangian for these chaotic jerk systems \cite{jerk,jerk1,jerk2a,jerk2b,jerk2c,jerk2d,jerk2e,jerk2f,jerk2g}. For example:
\begin{equation}
\lb{c14}
L=\frac{A}{2}\left(\dot{x}\right)^2-\frac{1}{2}\left({_a^C D^{\frac{1}{2}}_t} \dot{x}\right)^2+\frac{1}{2}\left({_a^C D^{\frac{1}{2}}_t} x^2\right){_a^C D^{\frac{1}{2}}_t} \dot{x}-\frac{x^2}{2} \Longrightarrow \dddot{x}+A\ddot{x}-\dot{x}^2+x=0,
\end{equation}
\begin{equation}
\lb{c15}
L=\frac{A}{2}\left(\dot{x}\right)^2-\frac{1}{2}\left({_a^C D^{\frac{1}{2}}_t} \dot{x}\right)^2-\frac{1}{4}\left({_a^C D^{\frac{1}{2}}_t} x^2\right){_a^C D^{\frac{1}{2}}_t} x-\frac{x^2}{2} \Longrightarrow \dddot{x}+A\ddot{x}-x\dot{x}+x=0,
\end{equation}
\begin{equation}
\lb{c16}
L=\frac{A}{2}x\left(\dot{x}\right)^2-\frac{1}{2}\left({_a^C D^{\frac{1}{2}}_t} \dot{x}\right)^2+\frac{A+2}{4}\left({_a^C D^{\frac{1}{2}}_t} x^2\right){_a^C D^{\frac{1}{2}}_t} \dot{x}-\frac{x^2}{2} \Longrightarrow \dddot{x}+Ax\ddot{x}-\dot{x}^2+x=0.
\end{equation}
Since \eqref{c14}, \eqref{c15} and \eqref{c16} are autonomous problems, we have from Corollary \ref{cor:FOCP:CV} the following conserved quantities under time translations ($\phi_1(\varepsilon,x(t))= x(t)$ and $\phi_2(\varepsilon,t) = t+\varepsilon$):
\beq
\lb{c17}
\frac{A}{2}\left(\dot{x}\right)^2-\left({_a^C D^{\frac{1}{2}}_t} \dot{x}\right)^2+\frac{1}{2}\left({_a^C D^{\frac{1}{2}}_t} x^2\right){_a^C D^{\frac{1}{2}}_t} \dot{x}+\frac{x^2}{2}=\mbox{constant},
\eeq
\beq
\lb{c18}
\frac{A}{2}\left(\dot{x}\right)^2-\left({_a^C D^{\frac{1}{2}}_t} \dot{x}\right)^2+\frac{x^2}{2}=\mbox{constant},
\eeq
\beq
\lb{c19}
\frac{A}{2}x\left(\dot{x}\right)^2-\left({_a^C D^{\frac{1}{2}}_t} \dot{x}\right)^2+\frac{A+2}{4}\left({_a^C D^{\frac{1}{2}}_t} x^2\right){_a^C D^{\frac{1}{2}}_t} \dot{x}+\frac{x^2}{2}=\mbox{constant},
\eeq
respectively.

\subsection{Example 3: internal symmetry}

Finally, let us consider a problem displaying internal symmetry. For example, the following Lagrangian function
\beq
\lb{c20}
L\left(t,x,\dot{x},{_a^C D^{\frac{1}{2}}_t} \ln(\dot{x})\right)=\frac{1}{2}\left({_a^C D^{\frac{1}{2}}_t} \ln(\dot{x})\right)^2+t\frac{\dot{x}}{x}
\eeq
describes the jerk system
\beq
\lb{c21}
\dddot{x}-2\frac{\ddot{x}^2}{\dot{x}}-\frac{\dot{x}^2}{x}=0,
\eeq
where we used the generalized Euler-Lagrange equation \eqref{t2} and \eqref{c7b} to obtain \eqref{c21}.

The Lagrangian \eqref{c20} has a continuous symmetry related to scale changes of $x$, since the transformation $\phi_1(\varepsilon,x(t))= e^{\varepsilon c}x(t)$ ($c\in \mathbb{R}$) and $\phi_2(\varepsilon,t) = t$ leaves \eqref{c20} invariant. Thus, from the generalized Noether-type theorem \ref{theo:tnadf1} we obtain
\beq
\lb{c22}
t+\frac{x}{\dot{x}}\;{_t D^{\frac{1}{2}}_b}{_a^C D^{\frac{1}{2}}_t} \ln(\dot{x})=\mbox{constant}
\eeq
as a conserved quantity related to a scale change symmetry.


\section{Conclusions}
\label{sec:C}

In the present work, we obtained a Euler-Lagrange equation for Lagrangians depending on fractional derivatives of nonlinear functions of the unknown function $x$ and $\dot{x}$.
Our Euler-Lagrange equation generalizes a previous result from one of us \cite{LazoJerk} and it enables us to obtain Lagrangians for nonlinear open and dissipative systems, and consequently, it allows us to use the most advanced methods of classical mechanic to study these systems. Furthermore, and our main result, we formulated a Noether-type theorem for these problems, that enable us to obtain conservative quantities for nonlinear dissipative systems. All conservation laws in Physics, for examples, conservation of energy and momentum, are associated with the invariance of the action functional under continuous transformations. Within this context, the generalization of the Noether Theorem for Lagrangians depending on fractional derivatives of nonlinear functions is important to study the symmetries of nonlinear systems. 
In order to illustrate the potential of application of our results, we obtain Lagrangians for some nonlinear chaotic jerk systems, and we analyze the conservation laws related to time translations and internal symmetries.
Finally, in which concerns developments and applications of our results, there are many directions of investigation left to explore. An interesting example would be the generalization of our results to optimal control problems. This and other examples are left to future works.


\section*{Acknowledgments}

The authors are grateful to the Brazilian foundations CNPq and Capes for financial support.



\section*{Disclosure statement}

The authors declare that they have no conflict of interest.


\end{document}